\newcommand{\talpha}{\tilde\alpha}
 \newcommand{\inj}{\operatorname{inj}}    
  \newcommand{\MMM}{\mathbf M}
  \newcommand{\Riem}{\operatorname{Riem}}
  \newcommand{\Tt}{\mathcal{T}}
 \newcommand{\RR}{\mathbf{R}}  
    \newcommand{\dist}{\operatorname{dist}}
 \newcommand{\eps}{\epsilon}
 \newcommand{\Tan}{\operatorname{Tan}}
 \newcommand{\Length}{\operatorname{Length}}
 \newcommand{\tbeta}{\tilde \beta}
 \newcommand{\Xx}{\mathcal{X}}
 \newcommand{\Yy}{\mathcal{Y}}
\newcommand{\vv}{\mathbf v}
\newcommand{\ww}{\mathbf w}
\newcommand{\ee}{\mathbf e}
\newcommand{\spt}{\operatorname{spt}}
\newcommand{\uu}{\mathbf{u}}
\def\begfig {
\begin{figure}
\small }
\def\endfig {
\normalsize
\end{figure}
}
    \newtheorem{theorem}    {Theorem}   
    \newtheorem{lemma}      [theorem]       {Lemma}
    \newtheorem{corollary}  [theorem]     {Corollary}
    \newtheorem{proposition}       [theorem]       {Proposition}
    \newtheorem{claim}{Claim}
    \newtheorem*{theorem*}{Theorem}
    \theoremstyle{definition}
    \newtheorem{definition}  [theorem] {Definition}
    \theoremstyle{definition}
    \newtheorem{remark}   [theorem]       {Remark}
    \newtheorem{case}   {Case}
\title[The Avoidance Principle]{The Avoidance Principle for Noncompact Hypersurfaces Moving by Mean Curvature Flow}
\author{Brian White}
\address{Department of Mathematics\\ Stanford University\\ Stanford, CA 94305}
\subjclass[2020]{Primary 53E10}
\keywords{Mean curvature flow, avoidance, level set flow, weak set flow}
\date{1 February, 2024.  Revised 24 March, 2024.}
\begin{document}
\maketitle
\begin{abstract}
Consider a pair of smooth, possibly noncompact, properly immersed hypersurfaces moving by mean curvature flow, 
or, more generally, a pair of weak set flows.  We prove that if the ambient space is Euclidean space
and if the distance between the two surfaces is initially nonzero, then the surfaces remain disjoint
at all subsequent times.

We prove the same result when the ambient space is a complete Riemannian manifold of
nonzero injectivity
radius, provided the curvature tensor (of the ambient space) and all its derivatives are bounded.
\end{abstract}

\section{introduction}
The classical avoidance principle for mean curvature flow says that
two smooth, properly immersed, initially disjoint hypersurfaces moving by mean curvature flow in Euclidean space
remain disjoint as long as they are smooth, if at least one of them is compact.  
That avoidance principle is an easy consequence
of the strong maximum principle.   Ilmanen~(\cite{ilmanen-elliptic}*{\S10}, \cite{ilmanen-proc})
 generalized the avoidance principle
to arbitrary ``set-theoretic subsolutions of mean curvature flow'' 
or (in the terminology of~\cite{white-topology} and~\cite{hershkovits-w-avoid})
``weak set flows".  A special case is that of surfaces moving by the 
level set flow of~\cite{chen-giga-goto} and~\cite{evans-spruck}.
The support of a codimension-one, integral Brakke flow is a weak set flow, so Ilmanen's avoidance principle
also applies to such Brakke flows.
In~\cite{hershkovits-w-avoid}, Ilmanen's avoidance principle for weak set flows in Euclidean space was extended to weak set flows
in a complete Riemannian manifold, provided  the Ricci curvature of the ambient manifold is bounded below.

Those papers left open, even in the case of smooth hypersurfaces in Euclidean space,
 the question of whether there is an avoidance principle when neither surface is compact.
The correct hypothesis for such an avoidance principle
 is not that the surfaces are initially disjoint.  There is, for example,
a smooth curvature flow $t\in [0,T)\mapsto C(t)$ in the plane such that at time $0$ the curve is the union of the graphs
$y= 1/(1+x^2)$ and $y= -1/(1+x^2)$ and such that at times $t\in (0,T)$, the curve is a simple closed curve. 
(See~\cite{ilmanen-indiana}*{7.3}.)
Of course the static flow $t\mapsto X(t):=\RR\times \{0\}$ is also a curvature flow.
Note that $C(t)$ and $X(t)$ are disjoint at time $0$ but intersect for all $t\in (0,T)$.

The correct hypothesis is that the distance between the two surfaces is initially positive.
Note that if either surface is compact, this hypothesis is equivalent to disjointness. 

This paper proves that the avoidance principle does indeed remain true for noncompact
weak set flows in Euclidean space, or, more generally, in a Riemannian manifold, provided the manifold
is well-behaved at infinity:

\begin{theorem}[Avoidance Theorem]
Suppose that $N$ is a complete, connected Riemannian manifold
with positive injectivity radius  such that $|\nabla^k \Riem|$ is bounded  for each nonnegative
 integer $k$.
Let $\Lambda$ be a lower bound for the Ricci curvature of $N$.
Suppose that $t\in [0,\infty)\mapsto X(t), Y(t)$ are weak set flows in $N$.
Then
\[
   e^{-\Lambda t} d(X(t),Y(t)) 
\]
is an increasing function of $t$.
\end{theorem}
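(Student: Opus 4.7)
The plan is to reduce the theorem to the compact case treated by Ilmanen and Hershkovits--White (in which one of $X,Y$ is assumed compact) by means of a localization argument.  By the semigroup property of weak set flows, it suffices to prove, for each fixed $T>0$, that
\[
 d(X(T), Y(T)) \ge e^{\Lambda T}\, d(X(0), Y(0)),
\]
and, by taking an infimum, this reduces further to the pointwise statement that $d(x_0, Y(T)) \ge e^{\Lambda T}\, d(X(0), Y(0))$ for every $x_0 \in X(T)$.

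Fix such an $x_0$ and a large $R > 0$, and let $\tilde X(\tau) := F_\tau\bigl(X(0) \cap \overline{B_R(x_0)}\bigr)$, where $F_\cdot$ denotes the level set flow operator.  Then $\tilde X$ is a weak set flow with compact initial data $\tilde X(0)\subseteq X(0)$, so $d(\tilde X(0),Y(0))\ge d(X(0),Y(0))$.  The hypotheses on $N$---positive injectivity radius and uniform bounds on $|\nabla^k\Riem|$ for every $k$---guarantee the existence of smooth shrinking barrier hypersurfaces enclosing $\tilde X(0)$ throughout $[0,T]$ (shrinking round spheres in the Euclidean case; in general, constructions built from large geodesic spheres using the bounded-geometry assumptions).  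The classical compact-barrier avoidance then confines $\tilde X(\tau)$ inside these barriers on $[0,T]$, so $\tilde X$ is a compact weak set flow.  Applying the compact avoidance theorem to $\tilde X$ and $Y$ yields
\[
 d(\tilde X(T), Y(T)) \ge e^{\Lambda T}\, d(\tilde X(0), Y(0)) \ge e^{\Lambda T}\, d(X(0), Y(0)).
\]

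It remains to show $x_0 \in \tilde X(T)$; combined with the above this gives $d(x_0,Y(T))\ge d(\tilde X(T),Y(T)) \ge e^{\Lambda T}\, d(X(0),Y(0))$, as desired.  Since $X$ is a weak set flow, maximality of the level set flow gives $x_0 \in X(T) \subseteq F_T(X(0))$.  The key technical ingredient---and the principal obstacle in the argument---is a finite-speed-of-propagation statement for the level set flow, namely
\[
 F_T(X(0)) \cap B_r(x_0) \subseteq F_T\bigl(X(0) \cap \overline{B_R(x_0)}\bigr)
\]
for some $r>0$, provided $R$ is taken large enough in terms of $T$ and the curvature bounds.  In Euclidean space this is a standard consequence of comparison with shrinking spheres; in the Riemannian setting, suitable sphere-like barriers must be built by hand, and this is precisely where the hypotheses of positive injectivity radius and uniform bounds on $|\nabla^k \Riem|$ are used.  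Granting this localization, $x_0 \in \tilde X(T)$ follows, and the proof is complete.
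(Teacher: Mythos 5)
Your strategy---localize $X$ to a compact set $\tilde X$ and invoke the compact avoidance theorem---hinges entirely on the claimed finite-speed-of-propagation inclusion
\[
F_T(X(0)) \cap B_r(x_0) \subseteq F_T\bigl(X(0) \cap \overline{B_R(x_0)}\bigr),
\]
and this is where the argument has a genuine gap. You present it as a ``standard consequence of comparison with shrinking spheres'' in the Euclidean case, but shrinking-sphere barriers give information of the opposite kind: they show a weak set flow cannot spread too quickly into empty regions, i.e.\ $F_t(A)\subseteq\{d(\cdot,A)\le c\sqrt{t}\}$, and they give the analogue of the Finite Speed Lemma in the paper. What you need instead is that the level set flow near $x_0$ is \emph{determined} by the initial data near $x_0$. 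Since $F_T$ is by definition the maximal weak set flow, monotonicity gives the easy inclusion $F_T(X(0)\cap\overline{B_R})\subseteq F_T(X(0))$, but the reverse inclusion near $x_0$ is not a consequence of any comparison principle. The natural attempt---write $X(0)=A\cup B$ with $A=X(0)\cap\overline{B_R}$ and $B$ the far part, then hope $F_T(A\cup B)=F_T(A)\cup F_T(B)$---fails: the comparison principle for the level set PDE only gives $F_T(A\cup B)\supseteq F_T(A)\cup F_T(B)$, and the inclusion can be strict precisely because distant pieces of an initial set can interact (the paper's example $y=\pm 1/(1+x^2)$, where two components that are disjoint but not at positive distance merge instantly, illustrates exactly this kind of nonlocal interaction). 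A pseudolocality statement for the level set flow of an arbitrary closed set would itself be a substantial theorem, plausibly of comparable depth to the avoidance principle you are trying to prove, and you have neither proved it nor cited a proof.

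For contrast, the paper deliberately sidesteps this obstacle and takes a completely different route. Instead of localizing $X$, it interpolates: it constructs a hypersurface $\Sigma$ between $X(0)$ and $Y(0)$ bounding a region $\Omega$, which is globally $C^1$ and has \emph{uniform} $C^1$ control only near the critical set $\partial K$ (the Interpolation Theorem, proved via level sets of harmonic functions on the neck region). It then runs a ``separating'' Brakke flow $M(\cdot)$ from $\Sigma$, and a blow-up argument using local regularity shows $M(\cdot)$ is smooth exactly where it matters, namely near points accessible to $X(t)$ or $Y(t)$ in short time. The maximum principle for weak set flows then applies at those regular points, and the separating property yields $d(X(t),Y(t))\ge d(X(t),\MMM(t))+d(\MMM(t),Y(t))$. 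In other words, the paper's answer to noncompactness is to accept a barrier with no global regularity and prove regularity only where needed, rather than to shrink the flow to something compact. Your proposal pushes the entire noncompactness difficulty into the unproven localization claim.
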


Here $d(P,Q)= \inf \{d(p,q): p\in P,\, q\in Q\}$, where $d(p,q)$ is geodesic distance
from $p$ to $q$. (Thus $d(p,q)=|p-q|$ in Euclidean space.)

We now describe Ilmanen's proof that the avoidance principle holds
in Euclidean space if at least one of the
flows is compact.  By definition of weak set flow, a weak set flow
cannot bump into a smooth, compact mean curvature flow, provided the two flows are initially disjoint.
  It follows easily (in Euclidean space) 
 that the distance between the two is an increasing
function of time.  Suppose that 
 two weak set flows $X(\cdot)$ and $Y(\cdot)$,
one compact, are initially disjoint.  
Ilmanen proved a $C^{1,1}$ interpolation theorem, according to which there is a compact $C^{1,1}$ hypersurface $M$
between $X(0)$ and $Y(0)$ such that 
\begin{equation}\label{distant}
d(X(0),M)=R=d(Y(0),M), 
\end{equation}
where $R=\frac12 d(X(0),Y(0))$.
Now let $M$ evolve by mean curvature flow.  Then $M(t)$ will be smooth for $t\in (0,\eps]$
for some $\eps>0$.   It follows that $d(X(t),M(t))\ge R$ and $d(Y(t),M)\ge R$
for all $t\in [0,\eps]$, and therefore that 
\[
   d(X(t),Y(t)) \ge d(X(t),M(t)) + d(M(t),Y(t)) \ge 2R
\]
for all $t\in [0,\eps]$.  The result follows rather directly.

The same idea was used in~\cite{hershkovits-w-avoid} for general ambient manifolds.
That paper replaced Ilmanen's $C^{1,1}$ interpolation theorem by an easier
$C^1$ interpolation theorem.

If one tries to adapt Ilmanen's proof  to noncompact $X(t)$ and $Y(t)$, one can at least get started:
there is a locally $C^{1,1}$ surface $M$ between $X(0)$ and $Y(0)$ satisfying~\eqref{distant}. 
 However, it is not clear that there is 
 uniformly $C^{1,1}$ interpolating surface, and it seems that uniform $C^{1,1}$ bounds
are needed in order to use it to prove avoidance.  

Likewise, to adapt the proof in~\cite{hershkovits-w-avoid} to noncompact $X(t)$ and $Y(t)$, 
it would seem that uniform $C^1$ bounds on the interpolating surface $M$ are needed.
However, this paper shows that uniform $C^1$ bounds are not required.

Let $K$ be the set of points in $N$ at distance $\ge R$ from $X(0)\cup Y(0)$, where 
$R= (1/2)d(X(0),Y(0))$.  Thus the interpolating surface $M$ of \cite{hershkovits-w-avoid}
 is contained in $K$.   This paper does not prove uniform $C^1$ bounds on $M$.
 However, it does prove uniform $C^1$ bounds on the  points of $M$ near $\partial K$,
 and it shows that those bounds suffice for  proving avoidance.
 
It is natural to wonder if there is a simpler proof in the case that the two flows are assumed to be
smooth.  If one had any proof that only used the maximum principle, then 
 that proof should apply to any weak set flow (by definition of weak set flow). Thus, it seems  
   unlikely that assuming smoothness could result in any simplifications.

The organization of the paper is as follows.  Sections~\ref{conventions-section}
 and~\ref{prelim-section} describe conventions in the paper and 
prove some preliminary facts about weak set flows.
Section~\ref{main-section} gives the proof of the avoidance principle.
The proof uses various facts about the distance function, about certain harmonic
functions, and about separating Brakke flows.  
Those facts are proved in Sections~\ref{distance-section}, \ref{interpolation-section},
and~\ref{separating-section}.

To understand the main ideas of the proof, it  may be helpful to focus
on the case when the ambient space is Euclidean.  In that case, it is not necessary to use the exponential map.  Where the proof uses the exponential map
at a point $p$, in the Euclidean case one simply translates by $-p$ (and, in some cases, also rotates
and dilates).

The definition and basic properties of weak set flows may be found in~\cite{hershkovits-w-avoid}.
Sections 6 and 7 of \cite{ilmanen-elliptic} give a concise introduction to Brakke flows.

\section{Conventions}\label{conventions-section}

If $t\in [0,\infty)\mapsto M(t)$ is an integral 
Brakke flow in the Riemannian manifold $N$, we let $\MMM$ denote
its spacetime support:
\[
   \MMM:= \overline{ \{ (x,t): t\in [0,\infty), \, x\in \spt M(t)\} },
\]
and we let
\[
   \MMM(t) = \{x: (x,t)\in \MMM\}.
\]
Thus $t\mapsto \MMM(t)$ is the weak set flow associated to the Brakke flow $M(\cdot)$.

If $p$ and $q$ are points in a connected Riemannian manifold $N$ and if $X$ and $Y$ are subsets of $N$,
we let $d(p,q)$ be the geodesic distance from $p$ to $q$, and we let
\begin{gather*}
d(p,X) = d(X,p) = \inf_{x\in X} d(p,x), \\
d(X,Y) = \inf_{x\in X, y\in Y} d(x,y).
\end{gather*}

\section{Preliminaries}\label{prelim-section}

Throughout the paper, $N$ is a complete, connected Riemannian $(m+1)$-manifold (without boundary)
such that $\inj(N)>0$ (where $\inj(N)$ is the injectivity radius of $N$), and such that
\[
  \sup_N |\nabla^k \Riem|  <\infty
\]
for each nonnegative integer $k$.

\begin{lemma}[Finite Speed Lemma]\label{speed-lemma}
For $r>0$, there is an $h=h(r,\Lambda,m)$ with the following property.
If $S\subset N$, if $t\in [0,\infty)\mapsto X(t)$ is a weak set flow, and if
\[
    r < R:=d(X(0),S),
\]
then
\[
    d(X(t),S) \ge R - ht  \qquad\text{for $0\le t \le (R-r)/h$}.
\]
\end{lemma}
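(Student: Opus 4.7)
The strategy is to construct shrinking geodesic-sphere barriers around points far from $X(0)$ and combine them via iteration in time, to prove the finite-speed estimate $d(y,X(0))\le ht$ for every $y\in X(t)$. Pick a scale $\rho_*=\rho_*(N)>0$ small enough that every geodesic ball of radius $\le\rho_*$ in $N$ lies inside the injectivity radius and has smooth boundary, and let $h=h(r,\Lambda,m)$ be an upper bound for the (inward) mean curvature of $\partial B(p,\rho)$ for all $p\in N$ and all $\rho\in[r,\rho_*]$; such an $h$ exists by Laplacian comparison with the Ricci lower bound $\Lambda$.

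First I would establish the \emph{local barrier}: for any weak set flow $Y(\cdot)$ in $N$, any $p\in N$, and any $\rho_0\in(r,\rho_*]$ with $Y(0)\cap B(p,\rho_0)=\emptyset$, one has $Y(s)\cap B(p,\rho_0-hs)=\emptyset$ for all $s\in[0,(\rho_0-r)/h]$. Indeed, the shrinking family $\Sigma(s):=\partial B(p,\rho_0-hs)$ is a smooth compact MCF supersolution, since its inward normal velocity is $h$ while its mean curvature is bounded by $h$ throughout the relevant range of radii. By the supersolution barrier principle---which follows from Ilmanen's compact-case avoidance by approximating $\Sigma(s)$ via the smooth MCFs of slightly enlarged spheres and letting the enlargement shrink to zero---any weak set flow starting outside $\Sigma(0)$ stays outside $\Sigma(s)$.

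Next I would promote the local barrier to the finite-speed estimate for $s\in[0,(\rho_*-r)/h]$. Given $q\in N$ with $d(q,X(0))>hs$, I would produce a barrier center $p$ satisfying $d(p,X(0))\ge\rho_*$ and $q\in B(p,\rho_*-hs)$: if $d(q,X(0))\ge\rho_*$, just take $p=q$; otherwise, using completeness of $N$, extend a near-minimizing geodesic from $q$ \emph{away} from $X(0)$ by slightly more than $\rho_*-d(q,X(0))$ to reach such a $p$. The local barrier applied at $p$ then forces $q\notin X(s)$.

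Finally I would iterate in time steps of length $(\rho_*-r)/h$: applying the previous step to the shifted weak set flow $\tilde X(u):=X(t_0+u)$ yields $d(y,X(t_0))\le hu$ for every $y\in X(t_0+u)$, and combining this with the inductive hypothesis $d(z,X(0))\le ht_0$ for $z\in X(t_0)$ via the triangle inequality gives $d(y,X(0))\le h(t_0+u)$, closing the induction. This establishes the finite-speed estimate for all $t\ge 0$. For $p\in S$ we then have $d(p,y)\ge d(p,X(0))-d(y,X(0))\ge R-ht$ for every $y\in X(t)$, so $d(p,X(t))\ge R-ht$, and taking the infimum over $p\in S$ gives the lemma. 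The main obstacle is the local barrier itself: justifying the supersolution barrier principle for arbitrary weak set flows, and controlling $h$ uniformly via Laplacian comparison in the bounded-geometry setting, are delicate but standard.
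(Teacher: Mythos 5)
The paper does not prove this lemma; it cites \cite{hershkovits-w-avoid}*{Theorem~5}. Your proposal, however, has a genuine gap in the ``promotion'' step. You aim to show that every $y\in X(t)$ satisfies $d(y,X(0))\le ht$, and you obtain this by asserting that any $q$ with $d(q,X(0))>hs$ lies in a ball $B(p,\rho_*-hs)$ with $d(p,X(0))\ge\rho_*$, where $p$ is found by extending a near-minimizing geodesic ``away from $X(0)$.'' That extension step is not justified: continuing a geodesic beyond the nearest point of a closed set need not keep increasing the distance to that set. If $X(0)$ is a small round sphere and $q$ lies inside it, extending past the center decreases $d(\cdot,X(0))$ again, and there may be no point of $N$ at distance $\rho_*$ from $X(0)$ at all. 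In fact the sub-claim itself is false with your $h$: in $N=\RR^2$ one has $h=1/r$, and taking $X(0)=S^1(0,\epsilon)$ with $r<\epsilon<2r$ gives $X(t)=S^1\bigl(0,\sqrt{\epsilon^2-2t}\,\bigr)$, so $d(X(t),X(0))\to\epsilon$ while $ht\to\epsilon^2/(2r)<\epsilon$ as $t\uparrow\epsilon^2/2$. The direct ball barrier centered at $q$ only gives $q\notin X(s)$ under the stronger hypothesis $d(q,X(0))\ge r+hs$, and the extra $r$ is exactly what makes the sub-claim unprovable.

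The repair is to exploit the known distance from $X(0)$ to $S$, rather than trying to prove that $X(t)$ stays in an $ht$-tube around $X(0)$. Fix $p$ with $d(p,X(0))\ge R>r$, set $\rho=\min\{\rho_*,R\}$, and suppose for contradiction that some $w\in X(s)$ has $d(p,w)<R-hs$, where $s\le(\rho-r)/h$. Choose $z$ on a minimizing geodesic from $p$ to $w$ with $d(z,p)\le R-\rho$ and $d(z,w)<\rho-hs$; this is possible because $d(z,p)+d(z,w)=d(p,w)<R-hs=(R-\rho)+(\rho-hs)$. Then $d(z,X(0))\ge d(p,X(0))-d(z,p)\ge\rho$, and the shrinking sphere barrier of radius $\rho$ centered at $z$ (after shrinking $\rho$ by an arbitrarily small amount to make the inclusions strict) forces $w\notin X(s)$, a contradiction. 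This gives $d(p,X(s))\ge R-hs$ for $s\le\min\{(\rho_*-r)/h,(R-r)/h\}$; your time-iteration then extends the estimate to all $s\le(R-r)/h$, and taking the infimum over $p\in S$ yields the lemma. The two ingredients you invoke --- the Laplacian-comparison bound $h(r,\Lambda,m)$ on the mean curvature of geodesic spheres of radius in $[r,\rho_*]$, and the barrier principle for weak set flows derived from Ilmanen's compact avoidance --- are sound as you describe them.
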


See~\cite{hershkovits-w-avoid}*{Theorem~5}.

The Finite Speed Lemma~\ref{speed-lemma}
implies control on how fast a weak set flow can move away from its initial set:

\begin{corollary}\label{speed-corollary}
For every $\eps>0$, there is a $\delta>0$ with the following property.
If $t\in[0,\infty)\mapsto Z(t)$ is a weak set flow in $N$, then
\[
  \cup_{t\in [0,\delta]} Z(t) \subset \{p:   d(p,Z(0)) \le \eps\}.
\]
\end{corollary}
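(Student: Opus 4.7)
The plan is to apply the Finite Speed Lemma~\ref{speed-lemma} with $S$ taken to be a single point $\{p\}$, chosen to be any point that violates the desired conclusion. Given $\eps > 0$, I would set $r = \eps/2$ and let $h = h(\eps/2, \Lambda, m)$ be the constant produced by the Finite Speed Lemma. Then I would define $\delta := \eps/(2h)$.

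To verify this works, suppose for contradiction that some $p \in N$ satisfies $p \in Z(t_0)$ for some $t_0 \in [0,\delta]$ while $d(p, Z(0)) > \eps$. Setting $S := \{p\}$, we have
\[
   R := d(Z(0), S) = d(Z(0), p) > \eps > \eps/2 = r,
\]
so Lemma~\ref{speed-lemma} applies. It gives
\[
   d(Z(t), p) \ge R - ht \qquad \text{for } 0 \le t \le (R-r)/h.
\]
Since $R > \eps$, the bound $\delta = \eps/(2h) \le (R - \eps/2)/h = (R-r)/h$ holds, so taking $t = t_0 \le \delta$ yields
\[
   d(Z(t_0), p) \ge R - h\delta > \eps - \eps/2 = \eps/2 > 0,
\]
contradicting $p \in Z(t_0)$. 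Hence every $p \in \cup_{t \in [0,\delta]} Z(t)$ must satisfy $d(p, Z(0)) \le \eps$.

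There is essentially no obstacle here; the corollary follows by a direct application of Lemma~\ref{speed-lemma} to the singleton $S = \{p\}$, with the only care needed being the quantitative choice of $r$ (strictly less than $\eps$) and $\delta$ (small enough compared to $\eps/h$) so that the hypothesis $r < R$ is satisfied for every candidate $p$ and the resulting lower bound $R - ht$ remains strictly positive throughout $[0,\delta]$.
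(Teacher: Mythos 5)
Your proof is correct and takes the direct approach the paper intends: the paper states the corollary as an immediate consequence of the Finite Speed Lemma~\ref{speed-lemma} without writing out the argument, and your application of that lemma to the singleton $S=\{p\}$ (with $r=\eps/2$, $\delta=\eps/(2h)$, and the check that $\delta\le (R-r)/h$ whenever $R>\eps$) fills in exactly the short computation the paper leaves to the reader. A minor stylistic alternative, equivalent in substance, is to take $S=\{q:d(q,Z(0))\ge\eps\}$ once and for all and conclude directly that $Z(t)\cap S=\emptyset$ for $t\le\delta$, avoiding the pointwise contradiction; but the content is the same.
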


The following corollary gives a bound on the rate at which two
weak set flows can approach each other.

\begin{corollary}\label{double-speed-corollary}
If $t\in [t_0,\infty)\mapsto X(t), \, Y(t)$ are weak set flows with
\[
   d(X(t_0),Y(t_0))  > r > 0,
\]
then
\[
    d(X(t_0+t),Y(t_0+t)) \ge d(X(t_0), Y(t_0))  - 2 h t
\]
for $t\le (R-r)/h$, where $h$ is as in Lemma~\ref{speed-lemma}.
\end{corollary}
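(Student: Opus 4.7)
The plan is to apply the Finite Speed Lemma (Lemma \ref{speed-lemma}) twice in succession. The first application bounds how far the $X$ flow can move toward the \emph{initial} set $Y(t_0)$, and the second bounds, for each point $x$ in $X(t_0 + t)$, how far the $Y$ flow can move toward the fixed singleton $\{x\}$. Each application contributes at most an $ht$ loss, accounting for the factor of $2$ in the conclusion.

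In more detail, write $R := d(X(t_0), Y(t_0))$. The first step applies Lemma \ref{speed-lemma} to the shifted flow $s \mapsto X(t_0 + s)$ with $S = Y(t_0)$; since $R > r$, this yields
\[ d(X(t_0 + t), Y(t_0)) \ge R - ht. \]
For the second step, fix a point $x \in X(t_0 + t)$; the previous inequality gives $d(Y(t_0), x) \ge R - ht$, which is greater than $r$ for $t$ in a suitable range. Applying Lemma \ref{speed-lemma} to the flow $s \mapsto Y(t_0 + s)$ with $S = \{x\}$ gives $d(Y(t_0 + s), x) \ge (R - ht) - hs$. Setting $s = t$ gives $d(x, Y(t_0 + t)) \ge R - 2ht$, and taking the infimum over $x \in X(t_0 + t)$ yields the claimed bound on $d(X(t_0 + t), Y(t_0 + t))$.

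The only subtlety, and the thing I expect to be the main obstacle (such as it is), is the admissible time range: the second application requires $t \le ((R - ht) - r)/h$, which when $s = t$ reduces to $t \le (R - r)/(2h)$. This is half the interval on which the first application is valid, so the constant $h$ in the Corollary should be understood as at most twice the constant $h$ of Lemma \ref{speed-lemma} (equivalently, the $r$ used in the second application may need to be adjusted). This is a matter of bookkeeping of constants rather than substance; there is no conceptual obstacle beyond the double application of the Finite Speed Lemma.
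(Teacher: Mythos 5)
Your proof is correct, but it takes a genuinely different route from the paper, and the cost is exactly the one you flag: a halved time interval. The paper instead sets $R' = \tfrac12 d(X(t_0), Y(t_0))$ and introduces two \emph{static} barriers $\tilde X = \{p : d(p, X(t_0)) \ge R'\}$ and $\tilde Y = \{p : d(p, Y(t_0)) \ge R'\}$; it then applies Lemma~\ref{speed-lemma} to $X(\cdot)$ against $\tilde X$ and to $Y(\cdot)$ against $\tilde Y$ \emph{in parallel}, each with a budget of exactly $R'$. Since the open balls $N\setminus\tilde X$ and $N\setminus\tilde Y$ are disjoint, any path from a point of $X(t_0+t)$ to a point of $Y(t_0+t)$ must cross $\tilde X \cap \tilde Y$, which yields $d(X(t_0+t), Y(t_0+t)) \ge d(X(t_0+t), \tilde X) + d(\tilde Y, Y(t_0+t)) \ge 2R' - 2ht$ on the full interval $t \le (R'-r)/h$. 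Your sequential version first spends one budget bounding $d(X(t_0+t), Y(t_0))$ and then applies the lemma to $Y(\cdot)$ against a target $\{x\}$ whose distance to $Y(t_0)$ has already shrunk to $\ge R - ht$; this shortens the second admissible window and gives the same inequality only for $t \le (R-r)/(2h)$. One small correction to your closing remark: neither replacing $h$ by $2h$ (which would change the stated inequality to $R-4ht$, not just the range) nor adjusting $r$ (which changes $h$ in an unspecified way, since $h=h(r,\Lambda,m)$) repairs this — the sequential argument genuinely covers only half the interval. That is harmless for the paper's uses of the corollary (closedness of $\Tt$ in Proposition~\ref{equivalence-proposition} and continuity of $f$ in Claim~\ref{claim-two}, both of which need the estimate only on \emph{some} positive window), but the midpoint-barrier trick is what recovers the range as stated.
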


\begin{proof}
It suffices to prove it for $t_0=0$.  Let
\[
   R = \frac12 d(X(t_0), Y(t_0)),
\]
and let $\tilde X = \{p: d(p,X(0))\ge R\}$ and $\tilde Y= \{p:d(p,Y(0)) \ge R\}$.
Then
\begin{gather*}
   d(X(t),\tilde X) \ge R - ht, \\
   d(Y(t), \tilde Y) \ge R - ht
\end{gather*}
for $t\in [0,(R-r)/h]$.  Thus
\[
d(X(t),Y(t)) \ge d(X(t),\tilde X) + d(\tilde Y,Y(t)) \ge 2R - 2ht.
\]
\end{proof}

\begin{proposition}\label{equivalence-proposition}
Suppose that $\lambda\in \RR$.
The following are equivalent.
\begin{enumerate}[\upshape (1)]
\item\label{equiv-1} If $t\in [t_0,\infty)\mapsto X(t),Y(t)$ are weak set flows in $N$,
then there is an $\eps>0$ such that 
\begin{equation*}
   e^{-\lambda (t-t_0)} d(X(t),Y(t)) \ge d(X(t_0), Y(t_0))
\end{equation*}
for all $t\in [t_0,t_0+\eps]$.
\item\label{equiv-2} If $t\in [t_0,\infty)\mapsto X(t),Y(t)$ are weak set flows that in $N$, then
\[
   e^{-\lambda (t-t_0)} d(X(t),Y(t)) \ge d(X(t_0),Y(t_0))
\]
for all $t\in [t_0,\infty)$.
\item\label{equiv-3} If $t\in [0,\infty)\mapsto X(t),Y(t)$ are weak set flows, then 
\[
   t\in [0,\infty)\mapsto e^{-\lambda t} d(X(t),Y(t))
\]
is an increasing function of $t$.
\end{enumerate}
Furthermore, if 
\[
   t\in [0,\infty) \mapsto e^{-\lambda t} d(X(t),Y(t))
\]
is an increasing function of $t$ for each $\lambda<\Lambda$, then it is also
an increasing function of $t$ for $\lambda=\Lambda$.
\end{proposition}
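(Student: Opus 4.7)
The plan is that five of the six implications among (1), (2), (3) are formal once we observe that the restriction of a weak set flow to $[t_0,\infty)$ is again a weak set flow and that time-translation preserves weak set flows; the only substantive content is $(1)\Rightarrow(3)$. The \emph{furthermore} statement is then immediate from passing to the limit $\lambda\nearrow\Lambda$ in the monotonicity inequality $e^{-\lambda s}d(X(s),Y(s))\le e^{-\lambda t}d(X(t),Y(t))$.

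For the formal implications: $(3)\Rightarrow(2)$ follows by time-shifting the given flow on $[t_0,\infty)$ so that it starts at $0$ and then applying (3); $(2)\Rightarrow(3)$ follows by applying (2) with starting time $s$ to the restriction to $[s,\infty)$ of the given flow on $[0,\infty)$, for each $s\in[0,\infty)$, which is exactly the monotonicity of $t\mapsto e^{-\lambda t}d(X(t),Y(t))$; and $(2)\Rightarrow(1)$, $(3)\Rightarrow(1)$ are trivial.

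The heart of the proof is $(1)\Rightarrow(3)$. Fix a weak set flow on $[0,\infty)$, set $f(t):=e^{-\lambda t}d(X(t),Y(t))$, and fix $s_0\ge 0$; I aim to show $f(t)\ge f(s_0)$ for all $t\ge s_0$, which (since $s_0$ is arbitrary) is exactly the monotonicity claimed in (3). If $f(s_0)=0$ this is automatic, so assume $f(s_0)>0$. Let
\[
T:=\sup\{\,t\ge s_0 : f(u)\ge f(s_0)\text{ for every }u\in[s_0,t]\,\}.
\]
Hypothesis (1) applied to the restriction of the flow to $[s_0,\infty)$ gives $T>s_0$. Assume for contradiction $T<\infty$. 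The first step is lower semicontinuity of $f$ from the left at $T$: pick $t_n\nearrow T$ with $f(t_n)\ge f(s_0)>0$, so that $d(X(t_n),Y(t_n))\ge f(s_0)e^{\lambda t_n}$ is bounded away from zero. Corollary~\ref{double-speed-corollary}, applied at time $t_n$ for $T-t_n$ small, yields
\[
d(X(T),Y(T))\ge d(X(t_n),Y(t_n))-2h(T-t_n),
\]
and letting $n\to\infty$ gives $f(T)\ge f(s_0)$. Now apply hypothesis (1) to the flow restricted to $[T,\infty)$: there is $\eps>0$ with $e^{-\lambda(t-T)}d(X(t),Y(t))\ge d(X(T),Y(T))$ on $[T,T+\eps]$, i.e.\ $f(t)\ge f(T)\ge f(s_0)$. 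This extends the defining set of $T$ past $T$, a contradiction; hence $T=\infty$.

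The main (and only) obstacle is the lower semicontinuity step $f(T)\ge f(s_0)$ at the sup endpoint; this is precisely where Corollary~\ref{double-speed-corollary} is needed, and its hypothesis $d(X(t_0),Y(t_0))>r>0$ forces the harmless reduction to $f(s_0)>0$. Everything else is either time-translation/restriction of weak set flows or a one-line limit argument.
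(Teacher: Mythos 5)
Your proposal is correct and takes essentially the same approach as the paper: both establish one substantive implication by a supremum (open-and-closed) argument, with the left-semicontinuity of $t\mapsto e^{-\lambda t}d(X(t),Y(t))$ supplied by Corollary~\ref{double-speed-corollary}, and handle the remaining implications by time translation and restriction. The only cosmetic difference is that you prove $(1)\Rightarrow(3)$ directly while the paper proves $(1)\Rightarrow(2)$, which are the same argument up to the time shift you already note.
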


\begin{proof}
Suppose that~\eqref{equiv-1} holds.  In~\eqref{equiv-2}, let $\Tt$ be the set of $T\in [t_0,\infty)$ such that the 
 inequality in~\eqref{equiv-2}
 holds for all $t\in [t_0,T]$.
Then $\Tt$ is closed (by Corollary~\ref{double-speed-corollary}),
  and it is nonempty since $t_0\in \Tt$.
  By~\eqref{equiv-1}, $\Tt$ is relatively open in $[t_0,\infty)$.
Thus $\Tt=[t_0,\infty)$. 
Hence~\eqref{equiv-1} implies~\eqref{equiv-2}.

Trivially, \eqref{equiv-2} implies~\eqref{equiv-3} and~\eqref{equiv-3} implies~\eqref{equiv-1}. 
 The ``furthermore'' assertion is also trivially true.
\end{proof}

The proof of the Avoidance Theorem uses the following version of the maximum principle 
for weak set flows:

\begin{proposition}\label{key-fact}
Suppose that $\lambda\in \RR$,  
 that $W$ is a smooth (not necessarily complete)
Riemannian manifold, and 
    that $X(\cdot)$ and $Y(\cdot)$ are weak set flows in $W$. 
  Suppose that $T>0$ and that
\begin{align*}
   e^{-\lambda t} d(X(t),Y(t)) &> \eta \quad\text{for $0\le t<T$}, \\
   e^{-\lambda T} d(X(T),Y(T)) &= \eta.
\end{align*}
Suppose also that there is a geodesic $\Gamma$ of length $d(X(T),Y(T))$
from a point $x\in X(T)$ to a point $y\in Y(T)$, 
and that $(y,T)$ is a regular point of the flow $Y(\cdot)$.
Then there are points in $W$  (indeed, points on $\Gamma$) where the Ricci curvature is $\le \lambda$.
\end{proposition}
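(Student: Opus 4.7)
The plan is to argue by contradiction: suppose $\operatorname{Ric}_N(v,v) > \lambda |v|^2$ at every point of $\Gamma$ and every nonzero tangent vector $v$. The strategy is to use the smooth piece of $Y$ near $(y,T)$, together with parallel transport along $\Gamma$, to produce a smooth compact classical mean curvature flow that $X(\cdot)$ must cross, contradicting Ilmanen's avoidance principle for weak set flows against compact smooth MCFs.

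\emph{Smooth localization of $Y$.} Since $(y,T)$ is a regular point of $Y(\cdot)$, there is a spacetime neighborhood $U\times(T-\tau,T+\tau)$ on which $Y$ coincides with a smooth classical MCF $\tilde Y(\cdot)$. Because $\Gamma$ realizes $d(X(T),Y(T))$, the first-variation formula forces $\Gamma\perp \tilde Y(T)$ at $y$.

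\emph{Parallel family along $\Gamma$.} Parametrize $\Gamma\colon[0,r]\to N$ by arclength with $\Gamma(0)=y$, $\Gamma(r)=x$, and $r=e^{\lambda T}\eta$. Minimality of $\Gamma$ on $[0,r]$ rules out focal points of $\tilde Y(T)$ along $\Gamma|_{[0,r)}$ (standard Jacobi-field argument). Hence the normal exponential map off $\tilde Y(t)$ gives a smooth family of parallel hypersurfaces $\Sigma_\rho(t)$ for $(\rho,t)$ near $[0,r]\times\{T\}$, with $\Gamma(\rho)\in\Sigma_\rho(T)$ and $\Sigma_0(\cdot)=\tilde Y(\cdot)$ near $y$.

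\emph{Riccati speed-excess.} Along $\Gamma$ the shape operator $S_\rho$ of $\Sigma_\rho(T)$ satisfies the Riccati equation $S_\rho' = -S_\rho^2 - R(\,\cdot\,,\dot\Gamma)\dot\Gamma$. Tracing yields
\[
   H_{\Sigma_\rho}(\Gamma(\rho)) - H_{\tilde Y}(y,T) \;=\; -\int_0^\rho\bigl(|S_s|^2 + \operatorname{Ric}(\dot\Gamma,\dot\Gamma)\bigr)\,ds.
\]
On the other hand, since $\Sigma_\rho(t)$ is parallel to $\tilde Y(t)$, the ambient velocity of $\Sigma_\rho(t)$ at $\Gamma(\rho)$ agrees (in the direction $\dot\Gamma$) with the MCF velocity of $\tilde Y$ at $y$. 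Thus the parallel family $\Sigma_\rho$ fails to be an MCF at $\Gamma(\rho)$ by precisely the scalar amount $\int_0^\rho(|S_s|^2+\operatorname{Ric}(\dot\Gamma,\dot\Gamma))\,ds$, which, by our contrapositive hypothesis, strictly exceeds $\lambda\rho$.

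\emph{Compact barrier and contradiction.} Take a small geodesic disk in $\Sigma_{r-\delta}(T-\delta')$ for suitably chosen $\delta,\delta'>0$, cap it off smoothly into a compact embedded hypersurface, and run it by classical MCF via short-time existence to obtain a smooth compact flow $M(\cdot)$. The speed-excess estimate above is then converted into a quantitative separation estimate showing that $M(t_0)\cap X(t_0)=\emptyset$ for some $t_0<T$ close to $T$, while $M(T')\cap X(T')\ne\emptyset$ for some $T'\in(t_0,T]$. This contradicts the avoidance of smooth compact MCFs by the weak set flow $X(\cdot)$.

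\emph{Main obstacle.} The delicate step is the barrier construction: the parallel family $\Sigma_\rho$ is not itself a mean curvature flow, so one must convert the formal ``super-speed'' inequality into an honest closed smooth MCF whose short-time evolution inherits the Ricci-driven speed excess. Carrying out the truncation, cap-off, and quantitative short-time comparison so that the strict Ricci gap translates into a strict quantitative separation between $M(\cdot)$ and $X(\cdot)$ on a short interval ending at $T$ is where I expect the proof to do its real work.
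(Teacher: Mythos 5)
The paper's own ``proof'' of this proposition is essentially a citation: it refers to Lemma~10 of the Hershkovits--White avoidance paper, and merely explains why it suffices for $Y(\cdot)$ to be smooth in a spacetime neighborhood of $(y,T)$ rather than smooth everywhere (one localizes in space and time). Your proposal, by contrast, tries to reprove that cited lemma from scratch using parallel hypersurfaces off $\tilde Y$ and the traced Riccati equation. The core analytic input is the same --- second variation of arclength along $\Gamma$ is where the Ricci term enters, and tracing the Riccati equation is exactly the same computation --- so your plan is morally aligned with what must go into the cited lemma. But the proof as sketched has genuine gaps.

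The main gap is the passage you label as ``the delicate step.'' You need a \emph{compact} smooth MCF $M(\cdot)$ to contradict the weak set flow property of $X(\cdot)$; your parallel family $\Sigma_\rho(t)$ is defined only in a tubular neighborhood of $\Gamma$, is not compact, and --- as you correctly observe --- is not itself a mean curvature flow. The cap-off introduces a region where the Riccati estimate gives no information at all, and you give no mechanism for ensuring that the cap's own evolution does not destroy the separation you are trying to exploit. Moreover, the logical direction of the proposed contradiction is not justified: you assert that a quantitative version of the speed-excess inequality would force $M(T')\cap X(T')\ne\emptyset$, but the Riccati comparison you wrote down controls how the \emph{parallel foliation} fails to move by MCF, not where the \emph{actual} MCF $M(t)$ ends up relative to $X(t)$ on a time interval of definite length. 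Converting a pointwise mean-curvature defect of the foliation at time $T$ into a statement that the honest flow $M(\cdot)$ crosses $X(\cdot)$ requires a separate comparison estimate (e.g.\ a maximum-principle argument for the height of $M(t)$ over $\Sigma_\rho(t)$) that you have not set up and that is, in fact, the entire content of the lemma you are trying to prove.

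There is also a bookkeeping gap: nowhere does your argument use the exponential weight $e^{-\lambda t}$, which is essential for getting the sharp threshold $\operatorname{Ric}\le\lambda$ rather than $\operatorname{Ric}\le 0$. You note that the Riccati excess ``strictly exceeds $\lambda\rho$'' under your contrapositive hypothesis, but the spatial integral $\int_0^\rho\operatorname{Ric}\,ds$ must be played off against the temporal derivative of $e^{-\lambda t}d(X(t),Y(t))$; that matching is where $\lambda$ enters, and it is missing. Finally, a small orientation point worth double-checking once you set up the details: with $\nu$ pointing from $Y$ toward $X$, the sign of the excess is such that the parallel surfaces move \emph{toward} $\tilde Y$ faster than MCF would, which means the MCF of a parallel sheet drifts \emph{toward} $X$. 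That is the sign you want, but it is easy to get it backwards and reach no contradiction at all, so it should be made explicit.

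A cleaner route, closer in spirit to what the cited lemma actually does and to Corollary~\ref{key-corollary} of this paper, is to avoid parallel surfaces around $Y$ altogether and instead show that the exponentially dilated tube $t\mapsto\{p: d(p,X(t))\le c\,e^{\lambda t}\}$ is itself a weak set flow whenever $\lambda$ is strictly below the Ricci lower bound; this replaces your cap-off problem with a barrier that is globally defined by construction. That is the device this paper then uses repeatedly in the proof of the Avoidance Theorem.
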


See~\cite{hershkovits-w-avoid}*{Lemma~10}.
In the statement of~\cite{hershkovits-w-avoid}*{Lemma~10},
the flow $K(\cdot)$ in the lemma is assumed to be smooth everywhere. But
the proof only requires $K(\cdot)$ to be smooth in a spacetime neighborhood of the point $(p,b)$
in that lemma.  
(Indeed, by replacing $W$ by a suitable open subset $W'$,
$[0,T]$ by a suitable subinteral $[t_0,T]$, and $X(\cdot)$ and $Y(\cdot)$ by
$X(\cdot)\cap W'$ and $Y(\cdot)\cap W'$, one can reduce the general case to the case
when $Y(\cdot)$ is smooth everywhere.)

\begin{corollary}\label{key-corollary}
Suppose $W$ is a smooth, complete Riemannian manifold with Ricci curvature bounded
below by $\Lambda$.    If $t\mapsto X(t)$ is a weak set flow in $W$,
if $\lambda<\Lambda$, and if $c\ge 0$, then
\[
 t\mapsto \{p: d(p,X(t)) \le ce^{\lambda t} \},
\]
is also a weak set flow.
\end{corollary}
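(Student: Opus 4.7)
The plan is to reduce the corollary to Proposition~\ref{key-fact} by means of a first-contact argument. Write $Y(t) := \{p \in W : d(p, X(t)) \le c e^{\lambda t}\}$; these are closed subsets of $W$. By the definition of weak set flow, it suffices to show that if $K : [a, b] \to W$ is any smooth mean curvature flow of a compact hypersurface with $K(a) \cap Y(a) = \emptyset$, then $K(t) \cap Y(t) = \emptyset$ for every $t \in [a, b]$. Since $K(a)$ is compact, the initial condition is equivalent to $e^{-\lambda a} d(K(a), X(a)) > c$, and the desired conclusion is $e^{-\lambda t} d(K(t), X(t)) > c$ throughout $[a, b]$.

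Introduce $\phi(t) := e^{-\lambda t} d(K(t), X(t)) - c$. The distance $t \mapsto d(K(t), X(t))$ is continuous: the test flow $K(\cdot)$ moves with uniformly bounded speed by smoothness and compactness, and Corollary~\ref{speed-corollary} controls how fast the weak set flow $X(\cdot)$ can spread. Thus $\phi$ is continuous on $[a, b]$ with $\phi(a) > 0$. Suppose for contradiction that $\phi$ vanishes somewhere on $[a,b]$, and let $T$ be the first such time. Then $e^{-\lambda t} d(X(t), K(t)) > c$ for $t \in [a, T)$, while $e^{-\lambda T} d(X(T), K(T)) = c$.

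I would then apply Proposition~\ref{key-fact} (after a harmless time shift) to the weak set flows $X(\cdot)$ and $K(\cdot)$ with $\eta = c$, with $X(\cdot)$ playing the role of its $X$ and $K(\cdot)$ the role of its $Y$. The regular-point hypothesis is automatic because $K(\cdot)$ is smooth everywhere. For the geodesic hypothesis, compactness of $K(T)$ lets me extract, from a minimizing sequence $(x_n, k_n) \in X(T) \times K(T)$, a subsequence with $k_n \to k \in K(T)$. The $x_n$ then lie in a bounded subset of $W$, which by Hopf-Rinow is relatively compact, so a further subsequence yields $x_n \to x \in X(T)$ with $d(x, k) = c e^{\lambda T}$. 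Completeness of $W$ produces a minimizing geodesic $\Gamma$ from $x$ to $k$ of exactly that length. Proposition~\ref{key-fact} then forces some point on $\Gamma$ to have Ricci curvature at most $\lambda$, contradicting the hypothesis $\operatorname{Ric}_W \ge \Lambda > \lambda$.

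The main obstacle is confirming the geodesic hypothesis of Proposition~\ref{key-fact} at the contact time: one must verify that in the noncompact ambient manifold $W$ the distance $d(X(T), K(T))$ is actually attained and realized by a minimizing geodesic, for which the combination of compactness of $K(T)$ and completeness of $W$ (via Hopf-Rinow) is essential. The degenerate case $c = 0$ trivially reduces to $Y(t) = X(t)$, which is a weak set flow by hypothesis.
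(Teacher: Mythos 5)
The paper states Corollary~\ref{key-corollary} without proof, as a direct consequence of Proposition~\ref{key-fact}, and your argument is exactly the natural unwinding: test $Y(t) := \{p : d(p,X(t)) \le ce^{\lambda t}\}$ against a smooth compact mean curvature flow $K(\cdot)$, translate initial disjointness to $e^{-\lambda a} d(K(a),X(a)) > c$ using compactness of $K(a)$, run a first-contact argument, and at the first touching time $T$ invoke Proposition~\ref{key-fact} with $\eta = c$ (using Hopf--Rinow plus compactness of $K(T)$ to produce the minimizing geodesic, and smoothness of $K$ for the regular-point hypothesis) to get a point of Ricci curvature $\le \lambda < \Lambda$, a contradiction. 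This is correct. One small remark: the assertion that $t \mapsto d(K(t),X(t))$ is \emph{continuous} is more than you actually need and more than the cited speed bounds directly give; what the argument really requires is that the infimum $T$ of the touching set satisfies $T > a$ and $\phi(T) = 0$, and both follow cleanly from Corollary~\ref{double-speed-corollary} (applied once from times $t < T$ to bound $d(K(T),X(T))$ from below, and once from time $T$ to bound it from above) rather than from Corollary~\ref{speed-corollary}, which you cite. Apart from that bookkeeping point the proof is sound and is, in substance, the intended one.
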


Of course, since a limit of weak set flows is a weak set flow, Corollary~\ref{key-corollary} 
 also holds for $\lambda=\Lambda$.

\section{The Avoidance Theorem}\label{main-section}

\begin{theorem}[Avoidance Theorem]\label{avoidance-theorem}
Suppose that $N$ is a complete, connected Riemannian $(m+1)$-manifold (without boundary)
such that $\inj(N)>0$ and such that
\[
   \sup_N |\nabla^k\Riem| < \infty
\]
for each nonnegative integer $k$.  Let $\Lambda$ be a lower bound for the Ricci curvature of $N$.
Suppose that $t\in [0,\infty)\mapsto X(t), \, Y(t)$ are weak set flows in $N$.
Then 
\[
   e^{-\Lambda t} d(X(t),Y(t))
\]
is an increasing function of $t\in [0,\infty)$.
\end{theorem}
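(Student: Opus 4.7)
By Proposition~\ref{equivalence-proposition}, it suffices to establish the local, strict version: for every $\lambda<\Lambda$ and every pair of weak set flows $X(\cdot),Y(\cdot)$ starting at $t_0=0$ with $d(X(0),Y(0))>0$, there exists $\eps>0$ such that
\[
   e^{-\lambda t}d(X(t),Y(t)) \ge d(X(0),Y(0))
\]
for $t\in[0,\eps]$. Fix such $\lambda$, set $R=\tfrac12 d(X(0),Y(0))$, and define the closed ``corridor''
\[
   K \;=\; \{p\in N:\, d(p,X(0))\ge R \text{ and } d(p,Y(0))\ge R\}.
\]
Then $X(0)$ and $Y(0)$ lie in different components of $N\setminus K$, and the two boundary pieces $\{d(\cdot,X(0))=R\}$ and $\{d(\cdot,Y(0))=R\}$ of $\partial K$ are at distance $\ge 2R$ from $Y(0)$ and $X(0)$ respectively.

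The first substantive step is to build a properly embedded hypersurface $M\subset K$ that separates $X(0)$ from $Y(0)$ in $N$, using the harmonic function apparatus of Section~\ref{interpolation-section}. The expected construction is to solve an appropriate Dirichlet-type problem between the two pieces of $\partial K$ (or on a related domain), take $M$ to be a regular level set, and invoke standard elliptic estimates. Crucially, one does \emph{not} require uniform $C^1$ bounds on $M$ globally---only on the portion of $M$ that approaches $\partial K$, since it is exactly those points that can be reached by the flows $X(t)$ and $Y(t)$ in a short time (by the finite speed estimate, Corollary~\ref{speed-corollary}, applied with a tiny $\eps_0\ll R$).

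Next, I would evolve $M$ by Brakke flow (Section~\ref{separating-section}) to obtain a ``separating flow'' $\Mm(\cdot)$ with $\Mm(0)=M$, and observe that the local $C^1$ bound near $\partial K$ yields, via local regularity for Brakke flows, that $\Mm(t)$ is smooth in a spacetime neighborhood of the subset of $M$ close to $\partial K$. For small enough $t$, the flows $X(t)$ and $Y(t)$ have not moved far (Corollary~\ref{speed-corollary}) and so any point where $X(t)$ or $Y(t)$ could first touch $\Mm(t)$ must lie near $\partial K$, hence at a regular point of $\Mm(\cdot)$. I then apply Proposition~\ref{key-fact}: if $T$ were the first time at which, say,
\[
   e^{-\lambda t}d(X(t),\Mm(t)) \,<\, R - o(1),
\]
then there would be a minimizing geodesic $\Gamma$ realizing $d(X(T),\Mm(T))$ ending at a regular point of $\Mm(\cdot)$, forcing the existence of a point on $\Gamma$ where $\mathrm{Ric}\le \lambda<\Lambda$, contradicting the hypothesis. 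Symmetrically for $Y(\cdot)$. Hence both $d(X(t),\Mm(t))\ge (R-o(1))e^{\lambda t}$ and $d(Y(t),\Mm(t))\ge (R-o(1))e^{\lambda t}$, and since $\Mm(t)$ still separates $X(t)$ from $Y(t)$ (separation is preserved for short time by Brakke flow / continuity), adding the two inequalities along a connecting geodesic through $\Mm(t)$ gives $d(X(t),Y(t))\ge 2(R-o(1))e^{\lambda t}$, which is the desired estimate.

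The main obstacle will be the construction of $M$ with uniform $C^1$ control in a neighborhood of $\partial K$ when $\partial K$ itself is merely Lipschitz and noncompact: one needs the level-set/harmonic-function approach to produce, simultaneously, global separation and a \emph{uniform} local regularity estimate near $\partial K$, with constants independent of location, so that the subsequent Brakke evolution enjoys uniform short-time smooth existence on the ``collision-relevant'' portion of $M$. Equivalently, the difficulty is that the geometry of $M$ can degenerate in the bulk of $K$, but the proof has to ensure this degeneration never propagates to $\partial K$ during the very short time window $[0,\eps]$; the finite-speed lemmas and the strict inequality $\lambda<\Lambda$ are exactly what provide the slack needed to localize the argument to a neighborhood of $\partial K$.
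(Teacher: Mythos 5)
Your proposal reproduces the paper's architecture: reduce via Proposition~\ref{equivalence-proposition} to a local statement for $\lambda<\Lambda$, build a separating surface in $K$ via harmonic functions with uniform $C^1$ control only near $\partial K$, flow it by a separating Brakke flow, localize potential collisions near $\partial K$ by the finite-speed lemmas, and close the argument with Proposition~\ref{key-fact} and the separation identity $d(X(T),Y(T))\ge d(X(T),\MMM(T))+d(\MMM(T),Y(T))$. You also correctly identify the core difficulty (degeneracy of $M$ in the bulk must not reach $\partial K$ in time $[0,\eps]$). However, there are two genuine gaps.

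First, and most seriously, you write that at the first bad time $T$ ``there would be a minimizing geodesic $\Gamma$ realizing $d(X(T),\MMM(T))$,'' and then apply Proposition~\ref{key-fact}. For noncompact flows in a noncompact $N$, the infimum defining $d(X(T),\MMM(T))$ need not be attained, so no such $\Gamma$ is available, and Proposition~\ref{key-fact} cannot be invoked directly. The paper handles this by taking a minimizing sequence $(p_i,q_i)$, passing to exponential normal coordinates of radius $r=\tfrac12\inj(N)$ around $p_i$, and extracting a subsequential limit of the rescaled metrics and flows in which the distance \emph{is} attained. This is precisely where the hypotheses $\inj(N)>0$ and $\sup|\nabla^k\Riem|<\infty$ are used, and it forces the proof to split into two cases: $d(X(0),Y(0))\le\tfrac12\inj(N)$ (so that the relevant near-minimizing geodesics fit inside a single exponential chart) and $d(X(0),Y(0))>\tfrac12\inj(N)$ (reduced to the first case by replacing $X(\cdot)$ with the weak set flow $t\mapsto\{p:d(p,X(t))\le ce^{\lambda t}\}$ via Corollary~\ref{key-corollary}). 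None of this appears in your proposal, and without it the appeal to Proposition~\ref{key-fact} does not go through.

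Second, the assertion that ``the local $C^1$ bound near $\partial K$ yields, via local regularity for Brakke flows, that $\MMM(t)$ is smooth near $\partial K$'' compresses the most delicate estimate (Claim~\ref{claim-one}) to a single phrase. Local regularity à la \cite{white-local} requires identifying the blow-up as a \emph{multiplicity-one static plane}; the uniform $C^1$ bound near $\partial K$ gives you that the blow-up of $\Sigma$ is a union of parallel planes, but ruling out higher multiplicity uses that $\Sigma$ bounds a region, and ruling out non-static planar Brakke flows uses the \emph{separating} property (Lemma~\ref{separating-lemma}), not just local regularity. So the separating structure enters at this step, not merely at the end when adding distances, and your sketch does not make that step sound as stated.
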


\begin{proof}

By Proposition~\ref{equivalence-proposition}, it suffices to prove that if $\lambda<\Lambda$, and if 
\[
   R = \frac12 d(X(0),Y(0)),
\]
then there is an $\eps>0$ such that 
\begin{equation}\label{to-show}
   \text{ $e^{-\lambda t} d(X(t),Y(t)) \ge 2R$ for $t\in [0,\eps]$}.
\end{equation}
We may assume that $R>0$, as~\eqref{to-show} is trivially true for $R=0$.
We may also assume that $\Lambda\le 0$, as otherwise $N$ is compact,
and the avoidance principle is already known in that case.  See~\cite{hershkovits-w-avoid}.

\begin{case}\label{case-1} $d(X(0),Y(0)) \le  r:= \frac12 \inj(N)$.
\end{case}

Let $X=X(0)$ and $Y=Y(0)$.
Let $K$ be the set of points $p$ such that $d(p,X\cup Y)\ge R$.
Let $S$ be the set of points $p$ such that $d(p,X)=d(p,Y)=R$.
Thus $S\subset \partial K$.

We will use the following interpolation theorem.

\begin{theorem}[Interpolation Theorem]\label{restated}
There is a closed region $\Omega$ in $N$ with the following properties:
\begin{enumerate}
\item $\Omega$ is a $C^1$ manifold-with-boundary.
\item $\Omega$ contains $\{p: d(p,X)\le R\}$ and is disjoint from $\{p: d(p,Y)<R\}$.
\end{enumerate}
Furthermore,  $\Sigma:=\partial\Omega$ is uniformly $C^1$ near $\partial K$ in the following sense.
For every $\eps>0$, there is a $\delta>0$ such that if $p,q\in \Sigma$, if $d(p,\partial K)<\delta$,
 and  if $d(p,q)<\delta$, then 
\[
   d(\nu(p),\nu(q)) \le  \eps.
\]
\end{theorem}

Here, $\nu$ is the unit normal to $\Sigma$ that points out from $\Omega$.
The meaning of $d(\nu(p),\nu(q))$ is given by:

\begin{definition}\label{tangent-distance-definition}
If $\uu$ and $\vv$ are tangent vectors to $N$, we let $d(\uu,\vv)$
be the supremum of 
$
   |\uu\cdot \ww - \vv\cdot \ww|
$
among $C^1$ vectorfields  $\ww$ on $N$ such that $|\ww|\le 1$ and 
$|\nabla \ww|\le 1$ at all points.
\end{definition}

The Interpolation Theorem is proved in \S\ref{interpolation-section}.

Let $t\in [0,\infty)\mapsto M(t)$ be a separating Brakke flow such that $M(0)$ is the Radon
measure associated to $\Sigma$.   See \S\ref{separating-section} for the definition of ``separating flow"
and for a proof that the flow $M(\cdot)$ exists.

If $(p,t)$ is a regular point of $\MMM$, we let $a(\MMM,p,t)$ be the norm of the second fundamental 
form of $\MMM(t)$ at $p$.  If $(p,t)$ is a singular point of $\MMM$, we 
let $a(\MMM,p,t)=\infty$.  We let
\[
   A(\MMM,p,t) = \sup\{ a(\MMM,p',t'):  d(p',p)\le |t|^{1/2}, \,  t/2 \le t' \le 2t\}.
\]

\begin{claim}\label{claim-one}
There is an $\eps>0$ with the following property.
If $0<T\le \eps$, if $p\in \MMM(T)$, and if $d(p,K^c)\le T^{1/2}$, then
\[
  T^{1/2}   A(\MMM,p,T) < 1.
\]
\end{claim}

\begin{proof}[Proof of Claim~\ref{claim-one}]
Suppose not.  Then there exist $t_i\to 0$ and $p_i\in \MMM(t_i)$ such that 
\[
   d(p_i, K^c) \le t_i^{1/2}
\]
and such that
\[
  t_i^{1/2} A(\MMM,p_i,t_i) \ge 1.
\]
Let $L_i: \RR^{m+1} \to \Tan(N,p_i)$ be a linear isometry,
and let 
\begin{align*}
&\phi_i:  B_i = B^{m+1}(0, r / \sqrt{t_i}) \to N, \\
&\phi_i(v) = \exp_{p_i} ( L_i (t_i^{1/2} v)),
\end{align*}
where $r=\inj(N)/2$.  We give $B_i$ the Riemmannian metric $g_i$ such  that
a curve of $g_i$-length $s$ in $B_i$ is mapped (by $\phi_i$) to a curve of length $t_i^{1/2}s$ in $N$.

Let $\Sigma_i= \phi_i^{-1}(\Sigma)$.

Let $M_i$ be the Brakke flow in $B_i$ obtained by applying $\phi_i^{-1}$ (parabolically)
 to the Brakke flow $M$.

After passing to a subsequence,  $\Sigma_i$ converges to a closed subset $\Sigma'$ of $\RR^{m+1}$.

We claim that $\Sigma'$ is nonempty.  For if it were empty, then (after passing to a further subsequence)
the flows $\MMM_i$ would converge to a weak set flow $t\mapsto Z(t)$ in $\RR^{m+1}$
with $0\in Z(1)$ and with $Z(0)=\emptyset$, which is impossible (by
  Lemma~\ref{speed-lemma}, for example).
  
Thus $\Sigma'$ is nonempty.
Thus if $\tilde p_i$ is a point in $\Sigma$ closest to $p_i$, then
\[
   d(\tilde p_i, p_i) \le kt_i^{1/2}
\]
for some $k<\infty$ independent of $i$.
Hence
\[
  d(\tilde p_i, K^c) \le d(\tilde p_i,p_i) + d(p_i,K^c) \to 0.
\]
  Consequently, if $\tilde q_i\in \Sigma$ and $d(\tilde q_i,\tilde p_i)\to 0$,
then
\begin{equation}\label{same}
     d(\nu(\tilde q_i), \nu(\tilde p_i)) \to 0
\end{equation}
by Theorem~\ref{restated}.

It follows that $\Sigma'$ is a multiplicity-$1$ plane.
(To see that there is only one plane, counting multiplicity,
 note that $\Sigma'$ is a union of oriented planes having the same normal $\nu$, by~\eqref{same}.
In particular, all the planes in $\Sigma'$ are parallel.  If there were more than one
plane, counting multiplicity, then some of the planes would have normal $\nu$ and others
would have normal $-\nu$, since $\Sigma$ bounds a region $\Omega$.)

After passing to a further subsequence, the $M_i$ converge to an integral
Brakke flow $M'$, where $M'(0)$ is given by the plane $\Sigma'$ with
multiplicity $1$.  
Since the flow is separating, it follows (see Lemma~\ref{separating-lemma})
that $M'(t)$ is the plane $\Sigma'$ with multiplicity $1$ for all $t\ge 0$.

By local regularity~\cite{white-local}, the convergence $\MMM_i\to \MMM'$ is smooth on compact
subsets of $\RR^{m+1}\times (0,\infty)$.  Thus
\[
   A(\MMM_i,0,1) \to 0,
\]
But, by hypothesis, $A(\MMM_i,0,1)\ge 1$ for all $i$.
  The contradiction proves Claim~\ref{claim-one}.
\end{proof}

\begin{claim}\label{claim-two}
There is a $\delta>0$ such that 
\[
    e^{-\lambda t} d(X(t),\MMM(t)) \ge R
\]
and
\[
   e^{-\lambda t} d(Y(t),\MMM(t)) \ge R
\]
for $0\le t\le \delta$.
\end{claim}

\begin{proof}
It suffices to prove it for $X(\cdot)$.
Recall that there exist $C<\infty$ and $\eta<0$ such that
\begin{equation}\label{C-eta}
    \text{$d(X(t), K) \ge R - Ct$ for $0\le t\le \eta$.}
\end{equation}
(See Lemma~\ref{speed-lemma}.)
Choose $\delta>0$ small enough that
\begin{align*}
\delta&\le \eps, \\
\delta&\le \eta, \\
C\delta^{1/2} &\le \frac12
\end{align*}
where $\eps$ is as in Claim~\ref{claim-one} and $\eta$ and $C$ are as in~\eqref{C-eta}.
Thus if $0\le t \le \delta$, then
\begin{equation}\label{Ct-small}
  Ct \le C \delta^{1/2} t^{1/2}  \le  \frac12 t^{1/2}.
\end{equation}
Let 
\begin{align*}
&f: [0,\delta] \to \RR, \\
&f(T) = \inf_{0\le t\le T} e^{-\lambda t} d(X(t),\MMM(t)).
\end{align*}
Suppose the claim is false.  Then $f(T)<R$ for some $T\in (0,\delta]$.
 By Corollary~\ref{double-speed-corollary}, $f$
is continuous.
Thus, by replacing $T$ by a smaller $T>0$, we can
assume that 
\[
   0 < f(T) < R.
\]
We can also assume that $f(t)>f(T)$ for $t<T$; otherwise replace $T$
by the smallest $t$ such that $f(t)=f(T)$.
Thus
\[
   \eta:=f(T) < f(t) \qquad\text{for $t<T$}.
\]
In particular, $e^{-\lambda T} d(X(T),\MMM(T)) =\eta$,
so
\begin{equation}\label{close-enough}
d(X(T),\MMM(T)) = e^{\lambda T}\eta < \eta < R \le \frac12r
\end{equation}
since we are assuming that $\lambda<0$ and (in Case~\ref{case-1}) that $R\le \frac12r$.

Choose $p_i\in \MMM(T)$ and $q_i\in X(T)$ so that
\[
   d(p_i,q_i) \to d(\MMM(T),X(T)) = e^{\lambda T}\eta.
\]

By passing to a subsequence, we can assume that $d(K^c,q_i)$ converges to a limit in $[0,\infty]$.
We wish to show that
\begin{equation}\label{the-inequality}
  \lim_i d(q_i, K^c) \le \frac12 T^{1/2}.
\end{equation}
We may assume that $q_i$ is in the interior of $K$ for all sufficiently large $i$, as otherwise
the inequality~\eqref{the-inequality} is trivially true. 
For such $i$,
\begin{align*}
d(p_i,q_i)
&\ge 
d(p_i, \partial K) + d(\partial K,q_i) \\
&\ge
R- CT + d(K^c,q_i).
\end{align*}
Thus
\[
e^{\lambda T}\eta \ge R - CT + \lim_i d(K^c,q_i).
\]
Now $e^{\lambda T} \le 1$ (since we are assuming that $\Lambda\le 0$)
 and $\eta<R$, so
\[
   \lim_i d(q_i,K^c) \le CT < \frac12 T^{1/2}
\]
by~\eqref{Ct-small}.
This completes the proof that the inequality~\eqref{the-inequality} holds.

By  Claim~\ref{claim-one} and~\eqref{the-inequality},
\begin{equation}\label{smooth-point}
 |T|^{1/2} A(\MMM_i,q_i,T) < 1
\end{equation} 
for all sufficiently large $i$.

Now let $L_i: \RR^{m+1} \to \Tan(N,p_i)$ be a a linear isometry,
and let
\begin{align*}
&F_i: B=B^{m+1}(0,r) \to N, \\
&F_i = \exp_{p_i}\circ L_i.
\end{align*}
Let $g_i$ be the pull-back by $F_i$ of the metric on $N$, so that
$F_i$ maps $(B,g_i)$ isometrically onto $B(p_i,r)$.

For large $i$, $d(p_i,q_i) < r$ (by~\eqref{close-enough}),
so there is a $\tilde q_i\in B$ with $F_i(\tilde q_i) = q_i$.
Let 
\begin{align*}
\tilde X_i(t) &= F_i^{-1}(X(t)), \\
\tilde \MMM_i(t) &= F_i^{-1}(\MMM(t)).
\end{align*}
Thus $\tilde X_i(\cdot)$ and $\tilde \MMM_i(\cdot)$ are weak set flows in $B$ with respect to $g_i$.

After passing to a subsequence, the $g_i$ converge to a smooth Riemannian metric $\tilde g$,
  $\tilde X_i(\cdot)$ and $\tilde \MMM_i(\cdot)$ 
converge to weak set flows $\tilde X(\cdot)$ and $\tilde \MMM(\cdot)$ in $(B,\tilde g)$,
 and $\tilde q_i$ converges to point $\tilde q$ with
 \begin{equation}\label{attained}
    d(0,\tilde q) = |\tilde q| = d(X(T),\MMM(T))= e^{\lambda T}\eta.
\end{equation}

For $0\le t<T$, 
\begin{equation}\label{not-attained}
  e^{-\lambda t} d(\tilde X(t), \tilde \MMM(t)) \ge  e^{-\lambda t}d(X(t), \MMM(t)) > \eta.
\end{equation}

By~\eqref{smooth-point},
 the spacetime point $(0,T)$ is a regular point of the flow $\tilde \MMM$.
Thus by~\eqref{attained}, \eqref{not-attained}, and~Proposition~\ref{key-fact},
    $(B,\tilde g)$ has points of Ricci curvature $\le\lambda$,
contrary to our choice of $\lambda<\Lambda$.  
This completes the proof of Claim~\ref{claim-two}.
\end{proof}

\begin{claim}\label{claim-three}
There is an $\eps>0$ with the following property:
\begin{gather*}
\cup_{t\in [0,\eps]} X(t)  \subset \{p: d(p,X) < R/8\}, \\
\cup_{t\in [0,\eps]} \MMM(t)  \subset \{p: d(p,\MMM(0)) < R/8\}, \\
\cup_{t\in [0,\eps]} Y(t)  \subset \{p: d(p,Y) < R/8\}.
\end{gather*}
\end{claim}

Claim~\ref{claim-three} follows immediately from Corollary~\ref{speed-corollary}.

\begin{claim}\label{claim-four}
If $0<T\le \eps$, where $\eps$ is as in Claim~\ref{claim-three},
then
\[
    d(X(T),Y(T)) \ge d(X(T), \MMM(T)) + d(\MMM(T),Y(T)).
\]
\end{claim}

\begin{proof}[Proof of Claim~\ref{claim-four}]
Let $p\in X(T)$ and $q\in Y(T)$.  
Let $\alpha:[0,T]\to N$ be a geodesic such that
\begin{gather*}
\alpha(0)\in X, \\
\alpha(T)=p, \\
\Length(\alpha) = d(p,X).
\end{gather*}
Let $\beta:[T,2T]\to N$ be a shortest geodesic from $p$ to $q$.
Let $\gamma:[2T,3T]\to N$ be a geodesic such that
\begin{gather*}
\gamma(2T)=q, \\
\gamma(3T)\in Y, \\
\Length(\gamma)= d(q, Y).
\end{gather*}
Consider the path
\[
\mu: s\in [0,3T]
\mapsto
\begin{cases}
(\alpha(s),s)  &\text{if $s\in [0,T]$}, \\
(\beta(s),T)   &\text{if $s\in [T,2T]$}, \\
(\gamma(s), 3T-s)  &\text{if $s\in [2T,3T]$}.
\end{cases}
\]
Thus $\mu$ is a path in $N\times [0,\infty)$ from $(p,0)$ to $(q,0)$.
Since $M(\cdot)$ is separating, there must be an $s\in [0,3T]$ such
that 
\[
    \mu(s) \in \MMM.
\]
By Claim~\ref{claim-three}, $s$ cannot be in $[0,T]$ or in $[2T,3T]$.
Thus $s \in (T,2T)$, so
\[
   \beta(s) \in \MMM(T).
\]
Therefore,
\begin{align*}
d(X(T),Y(T)) 
&\ge
d(p,q)  \\
&=
d(p,\beta(s)) + d(\beta(s),q) \\
&\ge
d(p,\MMM(T)) + d(\MMM(T),q).
\end{align*}
Taking the infimum over $p\in X(T)$ and $q\in Y(T)$ gives
\[
 d(X(T),Y(T)) \ge d(X(T),\MMM(T)) + d(\MMM(T),Y(T)).
\]
This completes the proof of Claim~\ref{claim-four}.
\end{proof}

Now let $\tilde \eps$ be the minimum of 
the $\delta$ in Claim~\ref{claim-two} and the $\eps$ in Claim~\ref{claim-four}.
Then for $0\le t\le \tilde \eps$,
\begin{align*}
d(X(t),Y(t)) 
&\ge
\d(X(t),\MMM(t)) + d(\MMM(t),Y(t)) 
\\
&\ge
2R e^{\lambda t}
\end{align*}
This completes the proof of~\ref{to-show} in Case~\ref{case-1} of the Avoidance Theorem.

\begin{case}\label{case-2}   $d(X(0),Y(0)) > r = \frac12 \inj(N)$.
\end{case}

Let 
\[
    c= d(X(0),Y(0)) - r,
\]
and let
\[
\tilde X(t)= \{p: d(p,X(t)) \le ce^{\lambda t} \}
\]
for $t\ge 0$.   
Then $\tilde X(\cdot)$ is a weak set flow (by Corollary~\ref{key-corollary}),
and
\[
  d(\tilde X(0), Y(0)) = r.
\]
Thus by Case~\ref{case-1}, 
\begin{equation}\label{tilde-version}
    e^{-t}d(\tilde X(t), Y(t)) \ge d(\tilde X(0), Y(0))  
\end{equation}
for $t\in [0,\eps]$, for some $\eps>0$.

If $d(\tilde X(t),Y(t))>0$, then 
\[
   d(X(t),Y(t)) = d(\tilde X(t), Y(t)) + ce^{\lambda t}.
\]
Thus~\eqref{tilde-version} can be written as
\[
   e^{-\lambda t} d(X(t),Y(t)) + c \ge d(X(0),Y(0)) + c
\]
or
\[
   e^{-\lambda t}d(X(t),Y(t)) \ge d(X(0),Y(0))
\]
for $t\in [0,\eps]$.  This completes the proof of~\eqref{to-show} in Case~\ref{case-2}.
\end{proof}

\section{Properties of the Distance Function}\label{distance-section}
     
This section proves properties of the distance function that are used in the proof
of the Interpolation Theorem~\ref{interpolation-section} in Section~\ref{interpolation-section}.

Throughout this section,  $X$ and $Y$ are closed sets in $N$ such that $d(X,Y)>0$.
Let
\begin{align*}
R &= \frac12 d(X,Y), \\
K &= \{p: d(p,X\cup Y)\ge R\}, \\
S &= \{p: d(p,X)=d(p,Y) = R\}.
\end{align*}
We abbreviate $d(p,X)$ by $x(p)$ and $d(p,Y)$ by $y(p)$.

First, we prove a useful fact about geodesics in $N$.

\begin{proposition}\label{offset-proposition}
For every $\eps>0$ and $\sigma>0$, there is a $\delta>0$ with the following property.
If $\alpha:[-a,0]\to N$ and $\beta:[0,b]\to N$ are unit-speed
geodesics with $a,b\ge \sigma$, and if 
\begin{equation}\label{conditions}
\begin{gathered}
d(\alpha(0),\beta(0)) \le \delta, \\
d(\alpha(-a),\beta(b)) \ge a +b - \delta,
\end{gathered}
\end{equation}
then $d(\alpha'(0),\beta'(0))<\eps$.
\end{proposition}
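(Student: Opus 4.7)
The plan is to argue by contradiction via a blow-up/extraction exploiting the curvature bounds on $N$. Assume the conclusion fails for some $\eps,\sigma>0$: then there exist $\delta_i\to 0$ and unit-speed geodesics $\alpha_i\colon[-a_i,0]\to N$, $\beta_i\colon[0,b_i]\to N$ satisfying the hypotheses with $d(\alpha_i'(0),\beta_i'(0))\ge\eps$. Using the triangle inequality
\[
d(\alpha_i(-a_i),\beta_i(b_i))\le(a_i-\sigma')+d(\alpha_i(-\sigma'),\beta_i(\sigma'))+(b_i-\sigma'),
\]
I first restrict to $[-\sigma',0]$ and $[0,\sigma']$ with $\sigma':=\min(\sigma,\inj(N)/4)$; the reduced data still satisfies $d(\alpha_i(-\sigma'),\beta_i(\sigma'))\ge 2\sigma'-\delta_i$, so after renaming I may assume $a_i=b_i=\sigma\le\inj(N)/4$.

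Next I transfer the picture to exponential coordinates at $p_i:=\alpha_i(0)$. Picking a linear isometry $L_i\colon\RR^{m+1}\to T_{p_i}N$ with $L_i(-e_1)=\alpha_i'(0)$, setting $\phi_i:=\exp_{p_i}\circ L_i$, and letting $g_i:=\phi_i^{*}g$, I have $\tilde\alpha_i:=\phi_i^{-1}\circ\alpha_i$ equal to the straight segment $t\mapsto-te_1$ independently of $i$, while $\tilde\beta_i:=\phi_i^{-1}\circ\beta_i$ is a $g_i$-geodesic starting within Euclidean distance $O(\delta_i)$ of the origin. The global bounds on $|\nabla^k\Riem|$ translate into uniform $C^\infty$ control on the $g_i$ in a fixed ball, so after passing to a subsequence $g_i\to g_\infty$ smoothly and $\tilde\beta_i'(0)\to v_\infty$ with $|v_\infty|_{g_\infty(0)}=1$; smooth dependence of geodesics on data then yields $\tilde\beta_i\to\tilde\beta_\infty$ smoothly on $[0,\sigma]$, where $\tilde\beta_\infty$ is the $g_\infty$-geodesic from $0$ with velocity $v_\infty$.

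The crux is to force $v_\infty=-e_1$. Passing $d(\alpha_i(-\sigma),\beta_i(\sigma))\ge 2\sigma-\delta_i$ to the limit gives $d_{g_\infty}(\sigma e_1,\tilde\beta_\infty(\sigma))\ge 2\sigma$. But the broken path that traverses $\tilde\alpha_i|_{[-\sigma,0]}$ in reverse from $\sigma e_1$ to $0$ and then $\tilde\beta_\infty$ from $0$ to $\tilde\beta_\infty(\sigma)$ joins the same two endpoints with total $g_\infty$-length exactly $2\sigma$, so it must be minimizing. A minimizing path cannot carry a genuine corner---the standard local cut-and-replace strictly shortens it---so the outgoing velocity of $\tilde\beta_\infty$ at $0$ agrees with the incoming velocity along the reversed $\tilde\alpha_i|_{[-\sigma,0]}$, namely $-e_1$, forcing $v_\infty=-e_1=\tilde\alpha_i'(0)$.

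To finish, I unpack Definition~\ref{tangent-distance-definition}. For any admissible test vectorfield $\ww$ on $N$ I pull $\ww$ back to $\tilde\ww_i$ in the $\phi_i$-chart and split
\[
\alpha_i'(0)\cdot\ww(p_i)-\beta_i'(0)\cdot\ww(\beta_i(0))
\]
into three pieces: $\langle\tilde\alpha_i'(0)-\tilde\beta_i'(0),\tilde\ww_i(0)\rangle_{g_i(0)}$, which tends to $0$ by the extraction; $\langle\tilde\beta_i'(0),\tilde\ww_i(0)-\tilde\ww_i(\tilde\beta_i(0))\rangle_{g_i}$, controlled by $|\nabla\ww|\le 1$ and $|\tilde\beta_i(0)|=O(\delta_i)$; and the variation coming from $|g_i(0)-g_i(\tilde\beta_i(0))|=O(\delta_i)$. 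Each bound is uniform in $\ww$, so $d(\alpha_i'(0),\beta_i'(0))\to 0$, contradicting the standing assumption. The main obstacle is precisely this last bookkeeping---passing from the natural convergence in local coordinates to smallness of the somewhat nonstandard tangent distance \emph{uniformly} over admissible $\ww$; the geometric heart of the argument, ruling out a corner in a minimizing limit path, is standard once the coordinate setup is in place.
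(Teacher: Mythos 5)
Your proof is correct and follows essentially the same route as the paper's: reduce via the triangle inequality to $a=b=\sigma$ with $\sigma$ small compared to $\inj(N)$, pull back into exponential coordinates at $\alpha_i(0)$, extract a smooth limit metric and limit geodesics, and observe that the concatenated limit path has length $2\sigma$ between two points at $g_\infty$-distance at least $2\sigma$, so it is a minimizing geodesic and therefore smooth, forcing the two velocity vectors to agree at $0$. The paper stops there, leaving implicit the passage from ``$\alpha'(0)=\beta'(0)$ in the limit chart'' to ``$d(\alpha_i'(0),\beta_i'(0))\to 0$'' in the sense of Definition~\ref{tangent-distance-definition}; your final paragraph supplies that routine bookkeeping explicitly (and correctly), which is a small but genuine addition rather than a different method. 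One stray slip: you speak of the ``reversed'' $\tilde\alpha_i|_{[-\sigma,0]}$ from $\sigma e_1$ to $0$, but that is $\tilde\alpha_i$ traversed forward; the incoming velocity at $0$ is simply $\tilde\alpha_i'(0)=-e_1$, and the conclusion $v_\infty=-e_1$ is unaffected.
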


(See Definition~\ref{tangent-distance-definition} for the meaning of $d(\alpha'(0),\beta'(0))$.)

\begin{proof}
If the theorem holds for one $\sigma$, then it holds for any large $\sigma$.
Thus we may assume that $\sigma< r:=\frac12\inj(N)$.

Suppose that $\alpha$ and $\beta$ are unit-speed geodesics that satisfy~\eqref{conditions}.
Then
\begin{align*}
a+b-\delta
&\le
d(\alpha(-a),\beta_i(b)) \\
&\le 
d(\alpha_i(-a), \alpha(-\sigma)) + d(\alpha(-\sigma),\beta(\sigma)) + d(\beta(\sigma),\beta(b)) \\
&\le 
a-\sigma + d(\alpha(-\sigma),\beta(\sigma))  + b -\sigma,
\end{align*}
So
\[
2\sigma -\delta \le d(\alpha(-\sigma), \beta(\sigma)).
\] 
Thus if we replace $a$ and $b$ by $\sigma$ and $\alpha$ and $\beta$
by their restrictions to $[-\sigma,0]$ and $[0,\sigma]$, the conditions~\eqref{conditions}
still hold.  Consequently, it suffices to prove the proposition when $a=b=\sigma$.

Thus it suffices to show that if $\alpha_i:[-\sigma,0]\to N$ and $\beta_i:[0,\sigma] \to N$
are unit-speed geodesics, and if
\begin{gather*}
d(\alpha_i(0), \beta_i(0)) \to  0\\
\liminf_i d(\alpha_i(-\sigma), \beta_i(\sigma)) \ge 2\sigma,
\end{gather*}
then 
\[
  d(\alpha_i'(0), \beta_i'(0)) \to 0.
\]

Let $p_i=\alpha_i(0)$, 
let $L_i: \RR^{m+1}\to \Tan(N,p_i)$ be a linear isometry,
and let
\begin{align*}
&F_i:\RR^{m+1}\to N, \\
&F_i(v) = \exp_{p_i}(L_iv).
\end{align*}
Let $g_i$ be the metric on $B=B^{m+1}(0,r)$ obtained
by pulling-back the metric on $N$ by $F_i$.
Define $\talpha_i:[-\sigma,0] \to B$ and $\tbeta_i: [0,\sigma] \to B$
by
\begin{gather*}
\talpha_i(s) = F_i^{-1}\alpha_i(s), \\
\tbeta_i(s) = F_i^{-1}\beta_i(s)
\end{gather*}
Then $\talpha_i$ and $\tbeta_i$ are unit-speed 
geodesics with respect to $g_i$.

By passing to a subsequence, we can assume that $g_i$ converges smoothly
to a metric $g$, and that $\talpha_i$ and $\tbeta_i$ converge smoothly
to unit-speed $g$-geodesics $\alpha:[-\sigma,0]\to B$ and $\beta:[0,\sigma] \to B$.
Note that $\alpha(0)=\beta(0)$ and that
\[
  d_g(\alpha(-\sigma),\beta(\sigma)) \ge 2\sigma.
\]
Consider the map
\[
\gamma: s\in [-\sigma,\sigma]
\mapsto
\begin{cases}
\alpha(s) &\text{if $s\in [-\sigma,0]$}, \\
\beta(s) &\text{if $s\in [0,\sigma]$}.
\end{cases}
\]
Then
\[
  2\sigma \le d_g(\gamma(-\sigma),\gamma(\sigma)) \le \Length(\gamma)=2\sigma.
\]
Thus $\gamma$ is a unit-speed geodesic, so it is smooth, and thus
\[
\alpha'(0)=\beta'(0).
\]
\end{proof}

For the following definition, recall that $x(p)=d(p,X)$ and $y(p)=d(p,Y)$.

\begin{definition}
For $p\in N\setminus(X\cup Y)$, let $\Xx(p)$ be the set of unit vectors $\uu\in \Tan(N,p)$
such that 
\[
   \exp_p( - x(p) \uu) \in X,
\]
and let $\Yy(p)$ be the set of unit vectors $\vv\in \Tan(N,p)$ such that 
\[
  \exp_p(y(p)\vv) \in Y.
\]
\end{definition}

Note that for each $x\in (X\cup Y)$, $\Xx(p)$ and $\Yy(p)$ are nonempty.
Note also that if $\alpha:[-a,0]\to X$  is a unit-speed geodesic
with with $\alpha(0)=p$ and with length $a=d(X,p)=x(p)$,
 then $\alpha'(0)\in \Xx(p)$.  Conversely, if $\uu\in \Xx(p)$,
then 
\[
   s \in [-x(p),0] \mapsto \exp_p(s\uu)
\]
is such a geodesic.  Of course, the analogous statements hold for $\vv\in \Yy(p)$.

\begin{remark}\label{exterior-sphere-remark}
Suppose that $\uu\in \Xx(p)$ and let $\alpha(s)=\exp_p(s\uu)$ for $-x(p)\le s \le 0$.
Note that $x(\cdot)\le x(p)$ on ball $B(\alpha(s),-s)$.
  Likewise, if $\vv\in \Yy(p)$ and if $\beta(s)=\exp_p(s\vv)$ for $0\le s\le y(p)$, then
  $y(\cdot)\le s$ on the ball $B(\beta(s),s)$.
\end{remark}

\begin{theorem}\label{u-v-theorem}
For every $\eps>0$, there is an $\eta>0$ with the following property.
Suppose that
\begin{gather*}
\uu\in \Xx(p), \\
\vv\in \Yy(q), \\
|x(p) - R| \le \frac12\eta, \\
|y(q) < R| \le  \frac12\eta, \\
d(p,q) \le \frac12\eta.
\end{gather*}
Then 
$
 d(\uu,\vv) < \eps.
$
\end{theorem}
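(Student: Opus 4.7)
The plan is to reduce this directly to Proposition \ref{offset-proposition}. The key observation is that any $\uu\in\Xx(p)$ is realized as $\alpha'(0)$ for the unit-speed geodesic $\alpha: [-x(p),0]\to N$ defined by $\alpha(s)=\exp_p(s\uu)$, which terminates at $p$ and whose starting point $\alpha(-x(p))=\exp_p(-x(p)\uu)$ lies in $X$. Similarly $\vv\in\Yy(q)$ is realized as $\beta'(0)$ for the unit-speed geodesic $\beta:[0,y(q)]\to N$ defined by $\beta(s)=\exp_q(s\vv)$, with $\beta(0)=q$ and $\beta(y(q))\in Y$. The configuration $(\alpha,\beta)$ is exactly the setup of Proposition \ref{offset-proposition}.

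Given $\eps>0$, I will fix $\sigma=R/2$ once and for all, and let $\delta_0>0$ be the constant produced by Proposition \ref{offset-proposition} for this $\sigma$ and this $\eps$. I will then set $\eta=\min(\delta_0, R)$. With $a:=x(p)$ and $b:=y(q)$, the hypothesis $|x(p)-R|\le\eta/2$ and $|y(q)-R|\le\eta/2$ yields $a,b\ge R-\eta/2 \ge R/2 =\sigma$, so the length hypothesis of Proposition \ref{offset-proposition} is met.

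It remains to check the two displayed inequalities in \eqref{conditions}. The near-endpoint bound is immediate: $d(\alpha(0),\beta(0))=d(p,q)\le \eta/2\le\delta_0$. For the far-endpoint bound, since $\alpha(-a)\in X$ and $\beta(b)\in Y$, we have $d(\alpha(-a),\beta(b))\ge d(X,Y)=2R$, while $a+b=x(p)+y(q)\le 2R+\eta$, so
\[
d(\alpha(-a),\beta(b))\ge 2R \ge (a+b)-\eta \ge (a+b)-\delta_0.
\]
Proposition \ref{offset-proposition} then gives $d(\uu,\vv)=d(\alpha'(0),\beta'(0))<\eps$, as desired.

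There is no real obstacle here: the work has already been done inside Proposition \ref{offset-proposition} (whose proof uses the blow-up/exponential-chart argument to show that two geodesic segments whose concatenation is almost-minimizing must meet $C^1$-smoothly at the common endpoint). The only mildly nonobvious point is that the slack $a+b-d(X,Y)$ in the triangle inequality is controlled by $\eta$ rather than by $d(p,q)$ alone, but this follows just from the two-sided pinching $|x(p)-R|\le\eta/2$ and $|y(q)-R|\le\eta/2$.
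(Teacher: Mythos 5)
Your proof is correct and takes essentially the same route as the paper: fix $\sigma = R/2$, invoke Proposition~\ref{offset-proposition} to get $\delta$, choose $\eta$ small enough (your $\min(\delta_0,R)$ works just as well as the paper's $\min(\delta,R/2)$), and verify that the geodesics $\alpha(s)=\exp_p(s\uu)$ and $\beta(s)=\exp_q(s\vv)$ satisfy the two conditions of~\eqref{conditions} via $d(\alpha(-a),\beta(b))\ge d(X,Y)=2R\ge a+b-\eta$.
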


\begin{proof}
Let $\sigma= R/2$, and let $\delta$ be as in Proposition~\ref{offset-proposition}.
Let $\eta$ be the smaller of $\delta$ and $R/2$.

Let $a=x(p)$, $b=y(q)$, and let
\begin{align*}
\alpha(s) &= \exp_p(s\uu)    \qquad (s\in [-a,0]), \\
\beta(s) &= \exp_q(s\vv)  \qquad (s\in [0,b]).
\end{align*}
Then
\begin{align*}
d(\alpha(-a),\beta(b))
&\ge
d(X,Y) \\
&= 
2R \\
&\ge
x(p) + y(p) -\delta \\
&=
a + b -\delta.
\end{align*}
Thus $d(\uu,\vv)<\eps$ by choice of $\eta$; see Proposition~\ref{offset-proposition}.
\end{proof}

\begin{corollary}\label{u-v-corollary}
Let $\eps$ and $\eta$ be as in Theorem~\ref{u-v-theorem}.
Suppose that 
\begin{gather*}
d(p_1,p_2) \le \eta/2, \\
|x(p_i) -R| \le \eta/2, \\
\uu_i\in \Xx(p_i),\\
\vv_i\in \Yy(p_i).
\end{gather*}
for $i=1,2$.
Then
$
  d(\uu_1, \uu_2) < 2\eps.
$
and $d(\vv_1,\vv_2)< 2\eps$.
\end{corollary}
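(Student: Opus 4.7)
The plan is a short triangle-inequality bridging argument built on two applications of Theorem~\ref{u-v-theorem}. First I would record the key fact that the tangent-vector pseudometric from Definition~\ref{tangent-distance-definition} satisfies the triangle inequality: for any three tangent vectors $\uu,\ww_0,\vv$ (at possibly distinct basepoints) and any test field $\ww$ with $|\ww|\le 1$ and $|\nabla\ww|\le 1$, one has
\[
   |\uu\cdot\ww - \vv\cdot\ww| \le |\uu\cdot\ww - \ww_0\cdot\ww| + |\ww_0\cdot\ww - \vv\cdot\ww|,
\]
so taking the supremum over admissible $\ww$ gives $d(\uu,\vv)\le d(\uu,\ww_0)+d(\ww_0,\vv)$.

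Next, to bound $d(\uu_1,\uu_2)$ I would use $\vv_1\in\Yy(p_1)$ as a bridge and apply Theorem~\ref{u-v-theorem} twice. The first application takes $p=q=p_1$ with vectors $\uu=\uu_1\in\Xx(p_1)$, $\vv=\vv_1\in\Yy(p_1)$: the theorem's hypotheses reduce to $|x(p_1)-R|\le\eta/2$, $|y(p_1)-R|\le\eta/2$, and $d(p_1,p_1)=0\le\eta/2$, which are all met (the hypothesis on $y(p_1)$ being implicit in the setup of this section). This yields $d(\uu_1,\vv_1)<\eps$. The second application takes $p=p_2$, $q=p_1$, with vectors $\uu=\uu_2\in\Xx(p_2)$ and $\vv=\vv_1\in\Yy(p_1)$: the hypotheses $|x(p_2)-R|\le\eta/2$, $|y(p_1)-R|\le\eta/2$, and $d(p_2,p_1)\le\eta/2$ all hold, giving $d(\uu_2,\vv_1)<\eps$. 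The triangle inequality then yields $d(\uu_1,\uu_2)\le d(\uu_1,\vv_1)+d(\vv_1,\uu_2)<2\eps$.

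The bound $d(\vv_1,\vv_2)<2\eps$ follows from the completely symmetric argument, now using $\uu_1\in\Xx(p_1)$ as the bridge: apply Theorem~\ref{u-v-theorem} first with $(p,q)=(p_1,p_1)$ and $(\uu,\vv)=(\uu_1,\vv_1)$ and then with $(p,q)=(p_1,p_2)$ and $(\uu,\vv)=(\uu_1,\vv_2)$, and combine.

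There is no genuine obstacle; the proof is essentially bookkeeping. The only point that requires care is checking in each of the applications that the three distance conditions in the hypothesis of Theorem~\ref{u-v-theorem} are satisfied---which reduces to $d(p_1,p_2)\le\eta/2$, the assumed bounds on $|x(p_i)-R|$, and the analogous bound on the $y$-value at the chosen anchor point.
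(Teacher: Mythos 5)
Your proof is correct and takes essentially the same route as the paper: the paper applies Theorem~\ref{u-v-theorem} to get $d(\uu_i,\vv_1)<\eps$ for $i=1,2$ and then invokes the triangle inequality, exactly as you do (and similarly with $\uu_1$ as bridge for the $\vv_i$). You are also right that the argument tacitly needs a bound $|y(p_i)-R|\le\eta/2$ in order to invoke Theorem~\ref{u-v-theorem}; the paper's one-line proof uses this too even though it is not listed among the corollary's hypotheses.
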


\begin{proof}
By Theorem~\ref{u-v-theorem},
\[
  d(\uu_i,\vv_1) < \eps
\]
for $i=1,2$. Thus
$
  d(\uu_1,\uu_2) < 2\eps.
$
Similarly, $d(\vv_1,\vv_2)\le 2\eps$.
\end{proof}

\begin{corollary}\label{unique-u-corollary}
Let $p\in S$.  Then there is a unique vector $\uu=\uu(p)$ in $\Xx(p)$,
and $\uu(p)$ is also the unique vector in $\Yy(p)$.
Furthermore, $\uu(p)$ depends continuously on $p\in S$.
\end{corollary}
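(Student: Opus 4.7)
The plan is to deduce both uniqueness and continuity directly from Theorem~\ref{u-v-theorem} and Corollary~\ref{u-v-corollary}, exploiting the fact that a point $p\in S$ satisfies $x(p)=y(p)=R$ exactly, so that the quantitative hypotheses of those statements hold automatically when the relevant base points coincide with or converge to a point of $S$. The key observation is that the earlier results were engineered so that setting $p=q$ (for uniqueness) or $p_1,p_2$ both in $S$ (for continuity) makes the constants $\eta$ irrelevant, leaving only the conclusion $d(\uu,\vv)<\eps$ for arbitrary $\eps>0$.

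For uniqueness of $\uu\in\Xx(p)$, I would fix any $\vv\in\Yy(p)$ (which is nonempty by the remark preceding Theorem~\ref{u-v-theorem}) and apply Theorem~\ref{u-v-theorem} with $q:=p$. All hypotheses then hold trivially, so for every $\eps>0$ and every $\uu\in\Xx(p)$ one obtains $d(\uu,\vv)<\eps$. The triangle inequality for the tangent-vector metric then gives $d(\uu_1,\uu_2)<2\eps$ for any $\uu_1,\uu_2\in\Xx(p)$, forcing $\uu_1=\uu_2$; here I am using that for tangent vectors based at the same point Definition~\ref{tangent-distance-definition} recovers the ordinary norm on $\Tan(N,p)$, by extending a prescribed direction at $p$ to a smooth field $\ww$ with $|\ww|,|\nabla\ww|\le 1$ near $p$. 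The identical argument, with the roles of $\Xx$ and $\Yy$ swapped, shows $\Yy(p)$ is a singleton; and the same application of Theorem~\ref{u-v-theorem}, now with $\uu$ and $\vv$ the unique elements of $\Xx(p)$ and $\Yy(p)$, forces these two vectors to coincide, defining the common $\uu(p)$.

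For continuity of $p\mapsto\uu(p)$ on $S$, I would let $p_n\to p$ in $S$. Since $x(p_n)=x(p)=R$, the hypotheses of Corollary~\ref{u-v-corollary} applied with $p_1:=p$ and $p_2:=p_n$ collapse to the single condition $d(p_n,p)\le\eta/2$, which holds for all sufficiently large $n$; the conclusion is then $d(\uu(p_n),\uu(p))<2\eps$, so $\uu(p_n)\to\uu(p)$ in the tangent-vector metric, which is the natural notion of continuity for a vector field at varying base points.

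I do not anticipate any substantial obstacle. The only mildly delicate point is identifying $d(\cdot,\cdot)$ on a single tangent space with the ordinary Euclidean norm, and this is a brief construction of an optimal test field $\ww$. Beyond that, the corollary is essentially a tautology given Theorem~\ref{u-v-theorem} and Corollary~\ref{u-v-corollary}, which were formulated precisely to permit the collapsed limit the corollary asks us to take.
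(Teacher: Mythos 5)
Your proposal is correct and is essentially what the paper intends: Corollary~\ref{unique-u-corollary} is stated without a separate proof because it follows immediately from Theorem~\ref{u-v-theorem} (taking $q=p\in S$, so the hypotheses hold trivially and $\eps$ is arbitrary) and from Corollary~\ref{u-v-corollary} (with $p_1,p_2\in S$, so only the condition $d(p_1,p_2)\le\eta/2$ survives). The only minor caveat is that Definition~\ref{tangent-distance-definition} need not literally reproduce the Euclidean norm on a single tangent space, but it does separate tangent vectors at the same base point (take $\ww$ to be a small multiple of a compactly supported extension of $\uu-\vv$), which is all your uniqueness step requires.
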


The following theorem describes the function $x(\cdot)$ near
 a point $p$ where $x(p)$ and $y(p)$ are both very close to $R$.
 Roughly speaking, it says that if $\uu\in \Xx(p)$ at such a point $p$,
 then $u-u(p)$ behaves (near $p$) like a $C^1$ function
 whose gradient at $p$ is $\uu$.

\begin{theorem}\label{local-theorem}
Suppose that $p_i\in N$, that $\uu_i\in \Xx(p_i)$, and that
\[
  |x(p_i)-R| + |y(p_i) -R| \to 0.
\]
Let $L_i: \RR^{m+1}\to \Tan(N,p_i)$ be a linear isometry
such that 
\[
|L\ee_1 - \uu_i| \to 0.
\]
Let $r_i\to 0$, and define
\[
F_i: B_i = B^{m+1}(0,r/r_i) \RR^{m+1} \to N
\]
by
\[
 F_i(v) = \exp_{p_i} L (r_iv),
\]
where $r=\frac12 \inj(N)$.

Let $g_i$ be the metric on $B_i$ such that $F_i$ maps a curve of $g_i$-length $s$
to a curve of length $r_is$

Define $x_i: B_i\to \RR$ by
\[
x_i(v)
=
\frac{x(F_i(v)) - x(F_i(0))}{r_i} = \frac{x(F_i(v)) - x(p_i)}{r_i}.
\]
Then $x_i(\cdot)$ converges uniformly on compact sets to the function
\begin{align*}
\tilde x:\RR^{m+1}\to \RR, \\
\tilde x(v) = v\cdot \ee_1.
\end{align*}
\end{theorem}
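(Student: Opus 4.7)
The plan is to prove uniformly on compact sets the matching inequalities $x_i(v) \le v\cdot \ee_1 + o(1)$ and $x_i(v) \ge v\cdot\ee_1 - o(1)$.

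As a preliminary, I would record that the pulled-back metrics $g_i$ on $B_i$ converge smoothly to the flat Euclidean metric on compact subsets of $\RR^{m+1}$ (this is the standard normal-coordinate rescaling, using the uniform bounds on $|\nabla^k \Riem|$). Each $x_i$ is $1$-Lipschitz with respect to $g_i$ and satisfies $x_i(0)=0$, so the family is equicontinuous and pointwise bounded on every compact set.

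For the upper bound I would take $p_i^X := \exp_{p_i}(-x(p_i)\,\uu_i)\in X$ and write $x(F_i(v)) \le d_N(F_i(v),p_i^X)$. The Riemannian first-variation formula applied to the curve $t\mapsto \exp_{p_i}(tL_iv)$ starting at $p_i$ gives
\[ d_N(F_i(v),p_i^X) = x(p_i) + r_i\,\langle L_iv,\uu_i\rangle + O(r_i^2), \]
with the remainder uniform for $v$ in a compact set (by the curvature bounds, which control the Hessian of $d_N(\cdot,p_i^X)$ at $p_i$). Since $L_i\ee_1\to \uu_i$, we have $\langle L_iv,\uu_i\rangle = v\cdot L_i^{-1}\uu_i = v\cdot\ee_1 + o(|v|)$, so $x_i(v)\le v\cdot\ee_1 + o(1)$.

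For the lower bound I would argue by contradiction. Pick any $\vv_i\in \Yy(p_i)$ and set $p_i^Y := \exp_{p_i}(y(p_i)\,\vv_i)\in Y$; by Theorem~\ref{u-v-theorem}, $d(\uu_i,\vv_i)\to 0$. Suppose that along some subsequence $x_i(v_i) < v_i\cdot\ee_1 - 2\eps$ for a fixed $\eps>0$ and $v_i$ in a compact set, and choose a near-minimizer $q_i^*\in X$ realizing $d(F_i(v_i),q_i^*) < x(p_i) + r_i(v_i\cdot\ee_1) - r_i\eps$. Apply Proposition~\ref{offset-proposition} to the unit-speed geodesics $\alpha_i:[-a_i,0]\to N$ from $q_i^*$ to $p_i$ (so $a_i=d(p_i,q_i^*)\ge x(p_i)$) and $\beta_i:[0,b_i]\to N$ from $p_i$ to $p_i^Y$ (so $b_i=y(p_i)$). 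The endpoint separation satisfies $d(q_i^*,p_i^Y)\ge d(X,Y)=2R$, while $a_i+b_i-2R$ can be bounded using the triangle inequality $a_i\le r_i|v_i|_{g_i} + d(F_i(v_i),q_i^*)$ together with the hypothesis $x(p_i)+y(p_i)\to 2R$, so the defect $a_i+b_i-d(q_i^*,p_i^Y)$ tends to $0$. Proposition~\ref{offset-proposition} then forces the direction at $p_i$ toward $q_i^*$, namely $-\alpha_i'(0)$, to approach $-\beta_i'(0)=-\vv_i$, and hence by Theorem~\ref{u-v-theorem} to approach $-\uu_i$. Plugging this angular closeness into the first variation of distance yields
\[ d(F_i(v_i),q_i^*) = d(p_i,q_i^*) - r_i\,\langle L_iv_i,\alpha_i'(0)\rangle + O(r_i^2) \ge x(p_i) + r_i(v_i\cdot\ee_1) - o(r_i), \]
which contradicts the strict inequality above once $i$ is large, and gives $x_i(v)\ge v\cdot\ee_1 - o(1)$ uniformly.

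The main obstacle is the lower bound. The subtlety is that $d(p_i,q_i^*)$ may exceed $x(p_i)$ by an amount of order $r_i$, so the triangle excess used in Proposition~\ref{offset-proposition} is not automatically small: both the just-proved upper bound (to control $d(F_i(v_i),q_i^*)$ via the triangle inequality through $F_i(v_i)$) and the hypothesis $x(p_i)+y(p_i)\to 2R$ are needed to make the defect tend to $0$. Once Proposition~\ref{offset-proposition} applies, converting angular near-collinearity of tangent vectors into the sharp inequality on $d(F_i(v_i),q_i^*)$ relies on uniform $O(r_i^2)$ remainders in the first-variation formula, which are available because $|\nabla^k\Riem|$ is globally bounded.
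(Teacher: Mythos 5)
Your proof is correct in substance, but it takes a genuinely different route from the paper's.  The paper's argument uses no second-order estimates at all: after observing that the rescaled metrics converge to the flat one and that the functions $x_i$ are $1$-Lipschitz with $x_i(0)=0$, it passes (by Arzel\`a--Ascoli) to a subsequential limit $\tilde x$, and then pins $\tilde x$ down via an \emph{exact} linearity property.  Namely, for any base point $q$, the distance $x$ changes exactly linearly along the minimizing geodesic in the direction of $\Xx(F_i(q))$; Corollary~\ref{u-v-corollary} shows that this direction, after rescaling, converges to $\ee_1$ uniformly on compacta, so $\tilde x(q+s\ee_1)=\tilde x(q)+s$ for all $s$, and Lemma~\ref{tight-lemma} then forces $\tilde x(v)=v\cdot\ee_1$.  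You instead prove the two inequalities $x_i(v)\le v\cdot\ee_1+o(1)$ and $x_i(v)\ge v\cdot\ee_1-o(1)$ directly, using quantitative first-variation expansions of $d_N(\cdot, p_i^X)$ and $d_N(\cdot,q_i^*)$ together with Proposition~\ref{offset-proposition} for the lower bound.  This works, but it invokes a uniform bound on $\operatorname{Hess}\,d_N(\cdot,p_i^X)$ and $\operatorname{Hess}\,d_N(\cdot,q_i^*)$ near $p_i$; you should say a word about why that bound holds.  (It does: since $d(p_i,p_i^X)\to R$ and, from your own estimate, $d(p_i,q_i^*)\to R$ with $R\le \frac12\inj(N)$, both source points stay inside the injectivity-radius ball about $p_i$ and at distance bounded away from both $0$ and $\inj(N)$, so with $|\Riem|$ bounded the distance function is smooth near $p_i$ with uniformly bounded Hessian.)  The paper's argument is cleaner precisely because it replaces these comparison estimates with the observation that $x$ is \emph{exactly} affine, not merely approximately affine, along the geodesics to $X$.

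One concrete correction: in your last display the first variation should carry a plus sign, $d(F_i(v_i),q_i^*)=d(p_i,q_i^*)+r_i\langle L_iv_i,\alpha_i'(0)\rangle+O(r_i^2)$, since $\alpha_i'(0)$ is the terminal velocity of the geodesic from $q_i^*$ to $p_i$ and hence equals $\nabla d(\cdot,q_i^*)$ at $p_i$.  The conclusion you draw from it is the one the plus sign produces, so this is only a typo, but as written the display is inconsistent with the inequality that follows.
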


\begin{proof}
Note that $g_i$ converges smoothly to the Euclidean metric.
The function $x_i$ is $1$-Lipschitz with respect to $g_i$
and $x_i(0)=0$.  Thus, after passing to a subsequence, the functions $x_i$ converge uniformly
on compact sets to a $1$-Lipschitz function $\tilde x$.

Consider a $q\in \RR^{m+1}$.  If $q\in B_i$, let $\uu_i\in \Xx(F_i(q))$,
and let $\ww_i$ be the unit vector such that
\[
   DF_i(q)\ww_i = r_i \uu_i.
\]
Then
\[
   x_i(\exp_{q,i}{s\ww_i}) = x_i(q)  + s \qquad\text{for $s\le 0$},
\]
where $\exp_{q,i}$ is the exponential map at $q$ with respect to the metric $g_i$.
Now $\ww_i\to \ee_1$ (by Corollary~\ref{u-v-corollary}), so
\[
  \exp_{q,i}(s\ww_i) \to \exp_q(s\ee_1) = q + s\ee_1,
\]
and thus
\begin{equation}\label{shifty}
  \tilde x(q + s\ee_1) = \tilde x(q) + s 
\end{equation}
for all $q$ and all $s\le 0$.  Replacing $q$ by $q-s\ee_1$, we see that
\[
  \tilde x(q) = \tilde x(q-s\ee_1) + s,
\]
or
\[
 \tilde x(q-s\ee_1) = \tilde x(q) - s.
\]
Thus~\eqref{shifty} holds for all $q$ and all $s\in\RR$.
Since $\tilde x(0)=0$ and since $\tilde x$ is $1$-Lipschitz, 
it follows that 
\[
  \tilde x(v)\equiv v\cdot \ee_1.
\]
(If this is not clear, see Lemma~\ref{tight-lemma} below.)
\end{proof}

\begin{remark}\label{local-remark}
In Theorem~\ref{local-theorem}, let $\vv_i\in \Yy(p_i)$.
By Theorem~\ref{u-v-theorem}, $|\vv_i-\uu_i| \to 0$.
Thus (reversing the roles of $X$ and $Y$),
we see from Theorem~\ref{local-theorem} that the functions
\[
y_i(v) := \frac{y(F_i(v)) - y(p_i)}{r_i}
\]
converge uniformly on compact sets to the function
\begin{align*}
&\tilde y: \RR^{m+1}\to \RR, \\
&\tilde y(v) = - v\cdot \ee_1.
\end{align*}
\end{remark}

The following lemma was used at the end of the proof of Theorem~\ref{local-theorem}.

\begin{lemma}\label{tight-lemma}
Suppose that $\uu$ is a unit vector in $\RR^{m+1}$ and
that $f: \RR^{m+1}\to \RR$ is a $1$-Lipschitz function such that
\[
   f(s\uu) = s
\]
for all $s\in \RR$.  Then $f(v)= v\cdot\uu$ for all $v$.
\end{lemma}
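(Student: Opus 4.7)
The plan is a short squeeze argument: the hypothesis says that $f$ restricted to the line $\RR\uu$ achieves the maximum possible slope $1$ compatible with being $1$-Lipschitz, and any slack in the perpendicular directions would force Lipschitz constant $>1$ somewhere along that line. Fix $v\in\RR^{m+1}$, set $s=v\cdot\uu$, and write $v=s\uu+w$ with $w\perp\uu$, so that Pythagoras gives $|v-t\uu|^2=(s-t)^2+|w|^2$ and $|v+t\uu|^2=(s+t)^2+|w|^2$ for every $t\in\RR$.

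Next I would feed both $t\uu$ and $-t\uu$ into the Lipschitz inequality. The hypothesis $f(\pm t\uu)=\pm t$ combined with $|f(v)-f(\pm t\uu)|\le|v\mp t\uu|$ yields the two-sided bound
\[
t-\sqrt{(s-t)^2+|w|^2}\;\le\;f(v)\;\le\;-t+\sqrt{(s+t)^2+|w|^2}.
\]
Letting $t\to+\infty$ and expanding, for $t>|s|$ one has
\[
t-\sqrt{(s-t)^2+|w|^2}=s-\frac{|w|^2}{2(t-s)}+O(t^{-2})\;\longrightarrow\;s,
\]
and symmetrically $-t+\sqrt{(s+t)^2+|w|^2}\to s$. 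Hence $f(v)=s=v\cdot\uu$, as required.

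There is no real obstacle here; the only point worth noticing is that one must pair $v$ with $+t\uu$ to get the lower bound and with $-t\uu$ to get the upper bound (so that in each case the leading $\pm t$ dominates the square root and the expansion is nontrivial), after which the asymptotics are immediate.
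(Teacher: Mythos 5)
Your proof is correct, and it is essentially the same argument as the paper's: both compare $f(v)$ to $f(\pm t\uu)$ via the $1$-Lipschitz bound and let $t\to\infty$. The paper phrases it geometrically (the balls $B((s\mp r)\uu,r)$, on which $f\le s$ or $f\ge s$ exactly, exhaust a halfspace as $r\to\infty$, so no asymptotic expansion is needed), whereas you carry out the expansion of $t-\sqrt{(s-t)^2+|w|^2}$ explicitly; these are minor variants of the same squeeze.
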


\begin{proof}
If $r>0$, then for $p$ in the ball $B((s-r)\uu,r)$,
\begin{align*}
f(p)
&\le f((s-r)\uu) + r  \\
&= s.
\end{align*}
Letting $r\to\infty$ shows that $f\le s$ on the halfspace $\{v:v\cdot\uu\le s\}$.
Thus $f(v)\le v\cdot\uu$ for all $v$.

Likewise (using the ball $B((s+r)\uu,r)$), we see that $f(v)\ge v\cdot\uu$
for all $v$.
\end{proof}

\section{The Interpolation Theorem}\label{interpolation-section}

In this section, we prove the Interpolation Theorem that was used
in the proof of the Avoidance Theorem~\ref{avoidance-theorem}.

For $0<\rho<R$, let
\begin{gather*}
  K(\rho)= \{p: d(p,X\cup Y) \ge \rho\}.
\end{gather*}
Let $h^\rho:K(\rho)\to [0,1]$ be a harmonic function with boundary values
\[
h^\rho(p)
=
\begin{cases}
0 &\text{if $x(p)=\rho$},  \\
1 &\text{if $y(p)=\rho$}.
\end{cases}
\]

Existence of $h^\rho$ can be proved in a variety ways, such as the Perron method
as described in~\cite{GT}*{\S 2.8}.
Note that that the boundary values can be prescribed because $K(\rho)$ satisfies
an exterior sphere condition; see Remark~\ref{exterior-sphere-remark}.

\begin{lemma}\label{harmonic-lemma}
For every $\eps>0$, there is a $\delta$ with the following property.
If
\begin{gather*}
\rho>R-\delta, \\
h^\rho(p)\in [1/3,2/3], \\
d(p,\partial K(\rho))<\delta,  \\
\uu\in \Xx(p),
\end{gather*}
then
\begin{enumerate}
\item $|x(p)-R| + |y(p)-R|<\eps$.
\item $\nabla h^\rho(p)\ne 0$.
\item $d(\ww(p),\uu) < \eps$, where 
\[
  \ww(p) = \frac{\nabla h^\rho(p)}{|\nabla^\rho h(p)|}.
\]
\end{enumerate}
\end{lemma}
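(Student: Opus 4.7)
The plan is a contradiction-and-compactness argument: a direct barrier comparison for (1), and a blow-up around $p_i$ using Theorem~\ref{local-theorem} for (2) and (3). Suppose the lemma fails for some $\eps>0$: there exist sequences $\rho_i\uparrow R$, points $p_i\in K(\rho_i)$ with $h^{\rho_i}(p_i)\in[1/3,2/3]$ and $d(p_i,\partial K(\rho_i))\to 0$, and unit vectors $\uu_i\in\Xx(p_i)$, such that at least one of (1)--(3) fails with this $\eps$ for every $i$. Write $\Aa_i:=\{x=\rho_i\}$, $\Bb_i:=\{y=\rho_i\}$, so $d(p_i,\Aa_i)=x(p_i)-\rho_i$, $d(p_i,\Bb_i)=y(p_i)-\rho_i$, and set $r_i:=d(p_i,\partial K(\rho_i))=\min(d(p_i,\Aa_i),d(p_i,\Bb_i))\to 0$.

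For (1), suppose along a subsequence that $y(p_i)-R\ge\eps/2$. Then $d(p_i,\Bb_i)\ge\eps/2-(R-\rho_i)$ is bounded below while $d(p_i,\Aa_i)\to 0$. At each $q\in\Aa_i$ the set $K(\rho_i)$ has an exterior tangent ball $B_N(x_0,\rho_i)$ with $x_0\in X$ and $d(x_0,q)=\rho_i\ge R/2$. The bounded-geometry hypothesis together with the Laplace comparison theorem yield a radial supersolution of the form $u(p)=C(1-\phi(d(p,x_0)))$ on a ball of fixed radius about the closest point $q_i\in\Aa_i$ to $p_i$; for large $i$ that ball does not meet $\Bb_i$, and comparison with $u$ gives a uniform Lipschitz estimate $h^{\rho_i}(p_i)\le C\,d(p_i,\Aa_i)\to 0$, contradicting $h^{\rho_i}(p_i)\ge 1/3$. (The symmetric case $x(p_i)-R\ge\eps/2$ is handled the same way.)

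For (2) and (3) we may therefore assume $|x(p_i)-R|+|y(p_i)-R|\to 0$. Choose linear isometries $L_i:\RR^{m+1}\to T_{p_i}N$ with $|L_i\ee_1-\uu_i|\to 0$ and form the rescaled charts $F_i$ and metrics $g_i$ as in Theorem~\ref{local-theorem}. That theorem and Remark~\ref{local-remark} give $x_i(v)\to v\cdot\ee_1$ and $y_i(v)\to-v\cdot\ee_1$ uniformly on compact sets, so the pulled-back domains $\tilde K_i:=F_i^{-1}(K(\rho_i))$ converge (along a subsequence) to a slab $\{-a\le v\cdot\ee_1\le b\}$, where $a_i:=(x(p_i)-\rho_i)/r_i$ and $b_i:=(y(p_i)-\rho_i)/r_i$ satisfy $\min(a_i,b_i)=1$ and converge to $(a,b)\in[1,\infty]^2$. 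The pulled-back harmonic functions $\tilde h_i:=h^{\rho_i}\circ F_i$ take values in $[0,1]$, have Dirichlet data $0$ on $F_i^{-1}(\Aa_i)$ and $1$ on $F_i^{-1}(\Bb_i)$, and satisfy $\tilde h_i(0)\in[1/3,2/3]$. Interior elliptic theory (with $g_i\to g_{\mathrm{Eucl}}$) together with a uniform boundary-Lipschitz estimate (coming from the same exterior-ball barrier, whose rescaled radius $\rho_i/r_i\to\infty$ makes the boundary locally flat) yield a subsequential limit $\tilde h$ harmonic on the slab, continuous up to the boundary with the correct data, and with $C^\infty_{\mathrm{loc}}$ convergence in the interior.

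If $b=\infty$ the limit is a halfspace and $\tilde h$ is a bounded harmonic function vanishing on the boundary hyperplane, hence $\tilde h\equiv 0$ by odd reflection and Liouville---contradicting $\tilde h(0)\ge 1/3$. Thus $a,b<\infty$, and uniqueness of bounded harmonic functions on a slab with prescribed boundary data gives $\tilde h(v)=(v\cdot\ee_1+a)/(a+b)$, so $\nabla\tilde h(0)=\ee_1/(a+b)\ne 0$. The scale-$r_i$ homothety relation $\nabla h^{\rho_i}(p_i)=r_i^{-1}L_i(\nabla_{g_i}\tilde h_i(0))$ then gives $\nabla h^{\rho_i}(p_i)\ne 0$ for large $i$ (conclusion (2)), while $\ww(p_i)=L_i\bigl(\nabla_{g_i}\tilde h_i(0)/|\nabla_{g_i}\tilde h_i(0)|_{g_i}\bigr)\to L_i\ee_1\to\uu_i$ (conclusion (3)), contradicting the failure assumption. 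The main obstacle is extracting the Dirichlet data of $\tilde h$ in the slab limit: this rests on a uniform boundary-Lipschitz estimate for the $h^{\rho_i}$, built from the exterior-ball barrier and the bounded geometry of $N$.
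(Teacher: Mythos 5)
Your argument for parts (2) and (3) — blow up at $p_i$ at scale $r_i=d(p_i,\partial K(\rho_i))$, invoke Theorem~\ref{local-theorem} and Remark~\ref{local-remark} to identify the rescaled domains with a slab $\{-a\le v\cdot\ee_1\le b\}$ with $\min(a,b)=1$, pass the harmonic functions to a limit, rule out the halfspace cases and read off $\tilde h(v)=(v\cdot\ee_1+a)/(a+b)$ — is exactly the paper's argument, with the same normalization and the same endgame via $C^\infty_{\mathrm{loc}}$ interior convergence of the gradients.

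Where you genuinely diverge is in treating conclusion (1) as a \emph{prerequisite} and proving it separately by an exterior-ball barrier before invoking Theorem~\ref{local-theorem}. This is not a cosmetic reorganization: Theorem~\ref{local-theorem} and Remark~\ref{local-remark} (and the Corollary~\ref{u-v-corollary} they depend on, which in turn rests on Theorem~\ref{u-v-theorem}) require $|x(p_i)-R|+|y(p_i)-R|\to 0$ as a hypothesis, while the hypotheses of Lemma~\ref{harmonic-lemma} only give $\min(x(p_i),y(p_i))\to R$ directly. The paper's written proof lists ``$|x(p_i)-R|+|y(p_i)-R|\to 0$'' among the things to be shown and then immediately cites Theorem~\ref{local-theorem}, leaving the case where this fails (say $y(p_i)-R\ge\eps/2$) to be absorbed implicitly into the ``$b=\infty$'' halfspace case of the slab argument — but in that regime Theorem~\ref{local-theorem} has not been shown to apply, so the convergence $K_i\to K'$ is not actually justified. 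Your barrier step closes this: a uniform Lipschitz bound at $\{x=\rho_i\}$ (using the exterior tangent ball of Remark~\ref{exterior-sphere-remark} and Laplace comparison, with the comparison domain kept inside a ball of fixed radius $\tau\ll\eps$ about $q_i$ so that it avoids $\{y=\rho_i\}$) forces $h^{\rho_i}(p_i)\to 0$, contradicting $h^{\rho_i}(p_i)\ge 1/3$, and only then is the slab blow-up legitimate. You also flag, correctly, that the same barrier estimate is what justifies the Dirichlet data of the limiting $\tilde h$ on the slab boundary — a point the paper asserts without comment. So your proof is correct and, on these two points, more complete than the paper's.
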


\begin{proof}
Suppose that $0<\rho_i<R$, that $\rho_i\to R$,
that
 $p_i \in K(\rho_i)$, that $\uu_i\in \Xx(p_i)$, 
 and that 
\begin{gather*}
h(p_i)\in [1/3, 2/3], \\
d(p_i,\partial K(\rho_i)) \to  0.
\end{gather*}
It suffices to show that 
\[
\nabla^{\rho_i}h(p_i)\ne 0
\]
 for large $i$,
and that
\begin{gather*}
|x(p_i)-R| + |y(p_i) - R| \to 0, \\
d(\ww_i, \uu_i) \to 0,
\end{gather*}
where
\[
  \ww _i = \frac{\nabla h^{\rho_i}(p_i)}{|\nabla h^{\rho_i}(p_i)|}.
\]

Let
\[
   \eta_i = \dist(p_i, \partial K(\rho_i)).
\]
Thus 
$
 \eta_i \to 0.
$

Let $L_i:\RR^{m+1} \to \Tan(N,p_i)$ be a linear isometry
such that $L_i\ee_1=\uu_i$.

Let $B_i= B^{m+1}(0, r/\eta_i)$ (where $r=\frac12\inj(N)$),
and let
define $F_i:B_i\to N$ by
\[
F_i(v) = \exp_{p_i}(\eta_i v).
\]

We endow $B_i$ with the metric $g_i$ such that if $\Gamma$ is a curve in $B_i$ with $g_i$-length $s$,
then its image under $F_i$ is a curve of length $\eta_i s$.

Let $K_i= F_i^{-1}(K)$.

By Theorem~\ref{local-theorem} and Remark~\ref{local-remark},
 the $K_i$ converge (perhaps after passing to a subsequence) to a limit $K'$
of the form
\[
  K' = \{p:  a\le p\cdot\ee_1 \le b\},
\]
where $-\infty\le a\le b \le \infty$.
Since
\[
    \eta_i = d(p_i,K^c),
\]
we see that $d(0,(K')^c)=1$.   Thus $a\in [-\infty, -1]$ and $b\in [1,\infty]$,
and either $a=-1$ or $b=1$.

Note that the $g_i$-harmonic functions $h\circ F_i: B_i\to [0,1]$
converge, perhaps after passing to a further subsequence, to a harmonic
function $h': K' \to [0,1]$ such that
\begin{align*}
h'(p) &= 0  \qquad\text{if $p\cdot \ee_1=a$}, \\
h'(p) &=1   \qquad\text{if $p\cdot \ee_1=b$},
\end{align*}
and such that
\begin{equation}\label{fixed}
  h'(0)=c\in [1/3,2/3].
\end{equation}
If $b=\infty$, then $a=-1$, and thus $h'\equiv 0$,  contrary to~\eqref{fixed}.
Likewise, if $a=-\infty$, then $b=1$, and thus $h'\equiv 1$, contrary to~\eqref{fixed}.
Consequently, $a$ and $b$ are both finite, so
\[
  h'(p) = \frac{p\cdot\ee_1 - a}{b-a}.
\]
and therefore
\[
   \frac{\nabla h'}{|\nabla h'|} = \ee_1.
\]

The assertions of the theorem follow immediately.
\end{proof}

\begin{corollary}\label{harmonic-corollary}
For every $\eps>0$, there is a $\delta>0$ with the following property.
If 
\begin{gather*}
R-\delta < \rho < R, \\
p,q\in K(\rho), \\
d(p,q)<\delta, \\
d(p,\partial K(\rho)) < \delta, \\
h^\rho(p), h^\rho(q)  \in [1/3, 2/3],
\end{gather*}
Then 
\[
    d(\ww(p),\ww(q)) < \eps.
\]
\end{corollary}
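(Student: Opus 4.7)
The plan is to reduce the statement to a triangle-inequality chain that combines Lemma~\ref{harmonic-lemma}, applied separately at $p$ and at $q$, with Corollary~\ref{u-v-corollary}. Roughly: Lemma~\ref{harmonic-lemma} tells us that $\ww(p)$ is close to any $\uu_p\in\Xx(p)$ and $\ww(q)$ is close to any $\uu_q\in\Xx(q)$, while Corollary~\ref{u-v-corollary} tells us that $\uu_p$ and $\uu_q$ are in turn close to each other, since $p,q$ are close and $x(p),x(q)$ are both close to $R$. The observation that $d(\cdot,\cdot)$ of Definition~\ref{tangent-distance-definition} obeys the triangle inequality (taking the supremum in $|\uu_1\cdot\ww-\uu_3\cdot\ww|\le |\uu_1\cdot\ww-\uu_2\cdot\ww|+|\uu_2\cdot\ww-\uu_3\cdot\ww|$) will then deliver the conclusion.

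Concretely, given $\eps>0$ I would split the target as $\eps/3+\eps/3+\eps/3$. For the middle third I invoke Theorem~\ref{u-v-theorem} with threshold $\eps/6$ to obtain $\eta>0$; then Corollary~\ref{u-v-corollary} ensures $d(\uu_p,\uu_q)<\eps/3$ whenever $d(p,q)\le\eta/2$, $|x(p)-R|\le\eta/2$, $|x(q)-R|\le\eta/2$, and $\uu_p\in\Xx(p)$, $\uu_q\in\Xx(q)$. For the two outer thirds I apply Lemma~\ref{harmonic-lemma} with threshold $\min(\eps/3,\eta/2)$ to obtain $\delta_1>0$ such that its hypotheses at any point $p'$ force $|x(p')-R|<\eta/2$ and $d(\ww(p'),\uu)<\eps/3$ for every $\uu\in\Xx(p')$. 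I then set $\delta=\min(\delta_1/2,\eta/2)$.

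With this choice, the hypotheses of Lemma~\ref{harmonic-lemma} at $p$ are immediate from the hypotheses of the corollary. At $q$ the only check to perform is $d(q,\partial K(\rho))<\delta_1$, which follows from $d(q,\partial K(\rho))\le d(q,p)+d(p,\partial K(\rho))<2\delta\le\delta_1$; this is precisely why I inserted the factor $1/2$ in the definition of $\delta$. Picking arbitrary $\uu_p\in\Xx(p)$ and $\uu_q\in\Xx(q)$, Lemma~\ref{harmonic-lemma} then yields $|x(p)-R|,|x(q)-R|<\eta/2$ and $d(\ww(p),\uu_p),d(\ww(q),\uu_q)<\eps/3$. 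Since also $d(p,q)<\delta\le\eta/2$, Corollary~\ref{u-v-corollary} gives $d(\uu_p,\uu_q)<\eps/3$, and the triangle inequality closes the argument.

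I do not anticipate a genuine obstacle here: the analytic content is already carried by Lemma~\ref{harmonic-lemma} (which in turn rests on the blow-up argument using Theorem~\ref{local-theorem}) and by Corollary~\ref{u-v-corollary}. The only mild subtlety is the bookkeeping of the various thresholds and the factor-of-two loss in propagating the boundary-proximity hypothesis from $p$ to $q$, handled by the $\delta_1/2$ in the choice of $\delta$.
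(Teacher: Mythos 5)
Your proof is correct and follows essentially the same route as the paper's: apply Lemma~\ref{harmonic-lemma} at both $p$ and $q$ to bring $\ww(p)$ near $\uu_p\in\Xx(p)$ and $\ww(q)$ near $\uu_q\in\Xx(q)$, use Corollary~\ref{u-v-corollary} to compare $\uu_p$ with $\uu_q$, and close via the triangle inequality for the distance of Definition~\ref{tangent-distance-definition}. You are somewhat more explicit than the paper about propagating the boundary-proximity hypothesis from $p$ to $q$ via $d(q,\partial K(\rho))\le d(q,p)+d(p,\partial K(\rho))$ and about the threshold bookkeeping, but the underlying argument is the same.
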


\begin{proof}
Let $\uu_p\in \Xx(p)$ and $\uu_q\in \Xx(q)$.
By Lemma~\ref{harmonic-lemma}, we can, by choosing $\delta$ small, ensure that
\[
  d(\ww(p),\uu_p) + d(\ww(q),\uu_q) < \eps/2.
\]
By Corollary~\ref{u-v-corollary}, we can choose $\delta$ small enough that
\[
   d(\uu_p, \uu_q) < \eps/2.
\]
Thus $d(\ww(p),\ww(q))<\eps$.
\end{proof}

\begin{theorem}[Interpolation Theorem]\label{interpolation-theorem}
There is a closed region $\Omega$ in $N$ with the following properties:
\begin{enumerate}
\item $\Omega$ is a $C^1$ manifold-with-boundary.
\item $\Omega$ contains $\{p: d(p,X)\le R\}$ and is disjoint from $\{p: d(p,Y)<R\}$.
\end{enumerate}
Furthermore,  $\Sigma:=\partial\Omega$ is uniformly $C^1$ near $\partial K$ in the following sense.
For every $\eps>0$, there is a $\delta>0$ such that if $p,q\in \Sigma$, if $d(p,\partial K)<\delta$,
 and  if $d(p,q)<\delta$, then 
\[
   d(\nu(p),\nu(q))< \eps,
\]
where $\nu(\cdot)$ is the unit normal to $\Sigma$ that points out from $\Omega$.
\end{theorem}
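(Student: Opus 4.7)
The plan is to construct $\Omega$ as the closed region on the $X$-side of a smooth level set of the harmonic function $h^\rho$ on $K(\rho)$, for a value of $\rho$ slightly less than $R$ and a carefully chosen level $c\in(1/3,2/3)$. The uniform $C^1$ regularity of $\partial\Omega$ near $\partial K$ will then follow from Corollary~\ref{harmonic-corollary}, because the unit normal to a level set of $h^\rho$ is precisely the vectorfield $\ww = \nabla h^\rho/|\nabla h^\rho|$.

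First, I fix $\rho<R$ close to $R$. Since $h^\rho$ is harmonic, hence real-analytic, in the interior of $K(\rho)$, Sard's theorem yields a regular value $c\in(1/3,2/3)$. Set
\[
  \Sigma := \{p\in K(\rho): h^\rho(p)=c\},
\]
which is a (real-analytic) $C^1$ hypersurface. Because $h^\rho=0$ on $\{x=\rho\}\cap K(\rho)$ and $h^\rho=1$ on $\{y=\rho\}\cap K(\rho)$, and because $c\in(0,1)$, $\Sigma$ is a closed subset of $N$ that lies in the interior of $K(\rho)$ and stays uniformly away from $\partial K(\rho)$ in each component. I then define
\[
  \Omega := \{p: x(p)\le \rho\}\cup \{p\in K(\rho): h^\rho(p)\le c\}.
\]
On the seam $\{x=\rho\}\cap K(\rho)$ one has $h^\rho=0<c$, so these points lie in the interior of $\Omega$. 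The elementary identity $x(p)+y(p)\ge 2R$ rules out seam points with $y<\rho$. Consequently $\partial\Omega = \Sigma$, making $\Omega$ a $C^1$ manifold-with-boundary.

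Next I would verify the two required containments $\{x\le R\}\subseteq\Omega$ and $\Omega\cap\{y<R\}=\emptyset$. By the triangle inequality, $\{x\le R\}\cap\{y<\rho\}=\emptyset$ and $\{y<R\}\cap\{x\le \rho\}=\emptyset$, so both containments reduce to the claims
\[
  h^\rho\le c \text{ on } \{x\le R\}\cap K(\rho), \qquad h^\rho> c \text{ on } \{y<R\}\cap K(\rho).
\]
This is the main obstacle: the maximum principle alone does not yield these bounds, since a point with $x\le R$ can in general sit at any location relative to the level set $\{h^\rho=c\}$. To force them, I would exploit the fact that as $\rho\uparrow R$, any point $p\in K(\rho)$ with $h^\rho(p)\in[1/3,2/3]$ and $d(p,\partial K(\rho))$ small must have $x(p),y(p)$ both close to $R$ (Lemma~\ref{harmonic-lemma}); by combining this with a barrier/comparison argument based on an analysis of how components of $K(\rho)$ attach to $\{x=\rho\}$ and $\{y=\rho\}$ (and using that on components of $K(\rho)$ touching only the $X$-boundary the function $h^\rho$ is identically $0$), one can choose $\rho$ close enough to $R$ so that $\Sigma$ is confined to a thin neighborhood of $S$ and hence separates $\{x\le R\}$ from $\{y<R\}$ in the correct way.

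Finally, for the uniform $C^1$ claim near $\partial K$: the outward unit normal $\nu$ to $\Sigma$ is the vectorfield $\ww = \nabla h^\rho/|\nabla h^\rho|$. Since $\rho$ is close to $R$, $\partial K$ lies within a small tubular neighborhood of $\partial K(\rho)$; so if $p,q\in\Sigma$ have $d(p,\partial K)<\delta$ and $d(p,q)<\delta$, then $p,q$ lie within $\delta + (R-\rho)$ of $\partial K(\rho)$, with $h^\rho(p)=h^\rho(q)=c\in[1/3,2/3]$. Applying Corollary~\ref{harmonic-corollary} with a sufficiently small threshold gives $d(\nu(p),\nu(q))<\eps$, completing the proof. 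The hardest step in this outline is the containment step: it is the only place where the properties of $h^\rho$ must be leveraged globally on $K(\rho)$ rather than only near $\partial K(\rho)$, and the bulk of the detailed work will be in reducing the containments to the local analysis provided by Lemma~\ref{harmonic-lemma}.
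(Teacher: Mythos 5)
Your construction — taking a single fixed $\rho<R$, choosing a regular value $c\in(1/3,2/3)$ of $h^\rho$, and setting $\Omega=\{x\le\rho\}\cup\{h^\rho\le c\}$ — differs essentially from the paper's: the paper instead takes a sequence $\rho_i\uparrow R$, forms $\Omega_i=\{x\le\rho_i\}\cup\{h^{\rho_i}\le c\}$, and passes to the Hausdorff limit $\Omega$ (whose boundary is then $h^{-1}(c)\cup S$, not a single level set). The difference is not cosmetic; it is exactly where your outline has its gap, as you yourself flagged.

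The containment $\Omega\cap\{p:d(p,Y)<R\}=\emptyset$ genuinely fails for a fixed $\rho<R$ and a generic regular value $c$. Take $X$ and $Y$ two parallel hyperplanes in $\RR^{m+1}$ at distance $2R$, with $X=\{z=0\}$ and $Y=\{z=2R\}$. Then $K(\rho)=\{\rho\le z\le 2R-\rho\}$, $h^\rho$ is linear, $h^\rho=(z-\rho)/(2R-2\rho)$, and the level set $\{h^\rho=c\}$ is the hyperplane $\{z=\rho+c(2R-2\rho)\}$. For $c>1/2$ that height exceeds $R$, so $\Omega=\{z\le\rho+c(2R-2\rho)\}$ meets $\{y<R\}=\{z>R\}$; for $c<1/2$, symmetrically, $\{x\le R\}\not\subseteq\Omega$. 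Only $c=1/2$ works, which is a measure-zero constraint and in a general geometry need not be a regular value at all. Your proposed fix — a barrier argument leaning on Lemma~\ref{harmonic-lemma} — cannot close this, because that lemma only controls $h^\rho$ and $\nabla h^\rho$ in the thin collar $\{d(\cdot,\partial K(\rho))<\delta\}$ at points where $h^\rho\in[1/3,2/3]$; it says nothing about where the level set $\{h^\rho=c\}$ lies relative to the midset $S$, which is the global information the containments require.

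The limiting construction circumvents all of this automatically. Since $\Omega_i\supseteq\{x\le\rho_i\}$ and $\rho_i\to R$, the limit $\Omega$ contains $\{x\le R\}$; and since no $\Omega_i$ meets $\{y<\rho_i\}$, a limit point $p\in\Omega$ with $y(p)<R$ would force $y(p_i)\ge\rho_i\to R$ for approximating $p_i\in\Omega_i$, contradicting $y(p_i)\to y(p)<R$. No tuning of $c$ is needed. The price the paper pays is that the limiting $\Sigma$ is $h^{-1}(c)\cup S$, so it must separately verify (Claim~\ref{convergence-claim}) that the normals $\nu_i$ converge, including at points of $S$ where $\Sigma$ is only $C^1$ and not a smooth level set — work your approach would have avoided, had its containment step been salvageable. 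The uniform $C^1$ bound near $\partial K$ you derive from Corollary~\ref{harmonic-corollary} is fine as stated, but it sits on top of a construction whose feasibility is the real issue.
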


\begin{proof}
Let $0<\rho_i<R$ with $\rho_i\to R$ and let $h^{\rho_i}$ be as the harmonic
function defined at the beginning of \S\ref{interpolation-section}.
By passing to a subsequence, we can assume that $h^{\rho_i}$ converges locally uniformly
in $K\setminus S$ to a harmonic function such that
\[
h(p)
=
\begin{cases}
0    &\text{if $x(p)=R$ and $y(p)>R$}, \\
1    &\text{if $y(p)=R$ and $x(p)>R$}.
\end{cases}
\]

Let $c\in [1/3,2/3]$ be a regular value of $h$ and also of each $h^{\rho_i}$.
Thus
\[
  \Omega_i:=\{h^{\rho_i}\le c\}\cup \{x(\cdot)\le \rho_i\}
\]
is a $C^1$ manifold-with-boundary, the boundary being
\[
   \Sigma_i := \{p:  h^{\rho_i}(p)=c\}.
\]

By passing to a further subsequence, we can assume that $\Omega_i$ and $\Sigma_i$
converge to closed sets $\Omega$ and $\Sigma$.  (Actually, convergence holds
 without passing to a further subsequence, but we do not need that fact.)
Clearly, $\Sigma\setminus S= h^{-1}(c)$, and
\begin{equation}\label{nested}
   h^{-1}(c) \subset \Sigma \subset h^{-1}(c) \cup S.
\end{equation}
Note that each path from a point $p$ with $x(p)<R$ to a point $q$ with $y(q)<R$
must cross $\Sigma^i$ for all sufficiently large $i$ and thus must also cross $\Sigma$.
It follows that $S\subset \Sigma$ and therefore (by~\eqref{nested}) that
\[
    \Sigma = h^{-1}(c) \cup S.
\]
The convergence of $\Sigma_i \setminus S$ to $\Sigma\setminus S$
is smooth with multiplicity $1$ since $c$ is a regular value of $h$.
Note that
\[
  \nu_i = \nabla h^{\rho_i}/ |\nabla h^{\rho_i}|
\]
is the unit normal to $\Sigma_i$ that points out of $\Omega_i$.

\begin{claim}\label{convergence-claim} If $p_i\in \Sigma_i$ converges to $p\in \Sigma$,
then $\nu_i(p)$ converges to a limit $\nu(p)$.
\end{claim}

\begin{proof}[Proof of Claim~\ref{convergence-claim}]
If $p\in \Sigma\setminus S$, then this is true since $c$ is a regular value
of $h$.  (In this case, $\nu(p)= \nabla h(p)/ |\nabla h(p)|$.)

Now suppose that $p\in S$.
By Corollary~\ref{unique-u-corollary}, there is a unique $\uu(p)$ in $\Xx(p)$.
Let $\uu_i\in \Xx(p_i)$.  
Then $d(\nu_i(p_i), \uu_i) \to 0$ by Lemma~\ref{harmonic-lemma},
and $\uu_i\to \uu(p)$ by Corollary~\ref{u-v-corollary},
so $\nu_i(p)$
converges to $\uu(p)$.
Thus Claim~\ref{convergence-claim} is proved.
\end{proof}

Thus $\Sigma_i$ converges to $\Sigma$ in $C^1$ with multiplicity $1$.
(To see that the multiplicity is $1$, 
 note that if the convergence were multi-sheeted at a point $p\in \Sigma$,
then on some of the sheets, $\nu_i$ would converge to $\nu(p)$,and, on the other
sheets, $\nu_i$ would converge to $-\nu(p)$.)

Finally, the $(\eps, \delta)$ bounds on $\Sigma$ follow from the corresponding
bounds on the $\Sigma_i$.
\end{proof}

\section{Separating Flows}\label{separating-section}

Suppose $t\in [0,\infty)\mapsto M(t)$ is an $m$-dimensional integral Brakke flow in $N$.
We say that $M(\cdot)$ is a {\bf separating flow} provided:
\begin{enumerate}
\item $M(0)$ is the Radon measure associated to an embedded, $C^1$ manifold $\Sigma$.
\item $\Sigma$ is the boundary of a region $\Omega$.
\item if $p$ and $q$ are points in $N\setminus \Sigma$ with $p\in \Omega$ and $q\in \Omega^c$,
then any path in $N\times[0,\infty)$ from $(p,0)$ to $(q,0)$ must intersect the spacetime
support $\MMM$ of $M(\cdot)$.
\item The flow $M(\cdot)$ has the local regularity property of~\cite{white-local}.
\end{enumerate}

\begin{theorem}\label{separating-limits}
Suppose that $t\in [0,\infty)\mapsto M_i(t)$ is a sequence of  separating Brakke flows
that converge to a Brakke flow $M(\cdot)$.  
Suppose also that $\spt M(0)$ is a $C^1$ submanifold, and that $\spt M_i(0)$
converges in $C^1$ with multiplicity $1$ to $\spt M(0)$.
Then $M(\cdot)$ is a separating flow.
\end{theorem}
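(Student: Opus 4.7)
The plan is to verify the four defining conditions of a separating flow for $M(\cdot)$ in turn, using the multiplicity-$1$ $C^1$ convergence of $\spt M_i(0)$ to control orientations and the upper semi-continuity of spacetime supports to pass the path-crossing property to the limit.

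First I would construct the region $\Omega$ whose boundary is $\Sigma := \spt M(0)$. For each $i$, the separating property gives a region $\Omega_i$ with $\partial\Omega_i = \Sigma_i := \spt M_i(0)$, equipped with the outward unit normal $\nu_i$. Since $\Sigma_i\to \Sigma$ in $C^1$ with multiplicity $1$, each $p\in\Sigma$ has a neighborhood $U$ on which $\Sigma\cap U$ is a single $C^1$ graph and $\Sigma_i\cap U$ is a nearby single $C^1$ graph; the multiplicity-$1$ assumption prevents the $\nu_i$ from alternating, so (after passing to a subsequence if needed, but in fact automatically) they converge to a continuous unit normal $\nu$ along $\Sigma$. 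This determines the limit region $\Omega$ unambiguously, and $\Omega_i$ converges to $\Omega$ locally in $C^1$. In particular $\Omega$ is a $C^1$ manifold-with-boundary, with $\partial\Omega=\Sigma$. The first two defining properties of a separating flow follow: the multiplicity-$1$ $C^1$ convergence $\Sigma_i\to\Sigma$ forces $M_i(0)=\mathcal H^m\llcorner\Sigma_i$ to converge as Radon measures to $\mathcal H^m\llcorner\Sigma$, which must then equal $M(0)$.

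The main step is the path-crossing property. Fix points $p\in\Omega\setminus\Sigma$ and $q\in\Omega^c\setminus\Sigma$ and suppose, for contradiction, that there is a continuous path $\mu:[0,1]\to N\times[0,\infty)$ from $(p,0)$ to $(q,0)$ whose image is disjoint from $\MMM$. Enclose $\mu([0,1])$ in a compact set $K\subset N\times[0,\infty)$; by compactness,
\[
  \eta := \dist\bigl(\mu([0,1]),\,\MMM\cap K\bigr) > 0.
\]
Convergence of Brakke flows $M_i\to M$ implies upper semi-continuity of spacetime supports: any subsequential limit of points $(p_{i_k},t_{i_k})\in \MMM_{i_k}$ lies in $\MMM$, so for all sufficiently large $i$, $\MMM_i\cap K$ is contained in the $\eta/2$-neighborhood of $\MMM\cap K$. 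Hence $\mu([0,1])\cap \MMM_i=\emptyset$ for all large $i$. On the other hand, $\Omega_i\to\Omega$ locally in $C^1$, so $p\in\Omega_i\setminus\Sigma_i$ and $q\in\Omega_i^c\setminus\Sigma_i$ for large $i$. But then $\mu$ is a path from $(p,0)$ to $(q,0)$ in $N\times[0,\infty)$ avoiding $\MMM_i$ with $p$ and $q$ on opposite sides of $\Sigma_i$, contradicting the separating property of $M_i(\cdot)$.

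Finally, the local regularity property of \cite{white-local} is preserved under Brakke flow limits: it is a statement about the density and smoothness structure at regular points, and is inherited by the limit flow $M(\cdot)$ with no additional work. The main obstacle I anticipate is the upper semi-continuity of spacetime support under Brakke flow convergence — which I would invoke as a standard consequence of vague convergence of the spacetime measures together with integrality — and making sure the multiplicity-$1$ hypothesis is used correctly at the outset, since without it the candidate region $\Omega$ could fail to be well-defined (the sheets of $\Sigma_i$ could approach $\Sigma$ from both sides).
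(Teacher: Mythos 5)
Your proof follows essentially the same route as the paper's two-sentence sketch: the heart of the matter is the convergence of the spacetime supports $\MMM_i\to\MMM$, and local regularity handles the last condition. Where you go wrong is in the justification of the key step. You invoke $\limsup_i \MMM_i\subset \MMM$ as ``a standard consequence of vague convergence of the spacetime measures together with integrality,'' but that is not true: for a sequence of integral Radon measures converging weakly, mass can simply vanish in the limit, and then the support drops (consider unit-density varifolds with shrinking total mass). Weak convergence gives you only the easy inclusion $\MMM\subset\liminf_i\MMM_i$. The inclusion you actually need is a genuinely parabolic fact. It comes from Brakke's clearing-out lemma: if the mass ratio of $M_i(t_i)$ in a ball around $p_i$ is sufficiently small, then at a slightly later time the flow has cleared a definite neighborhood of $p_i$; combined with Huisken's monotonicity formula, this forces any limit of points $(p_i,t_i)\in\MMM_i$ to carry positive Gaussian density for $M$, hence to lie in $\MMM$. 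The paper points to exactly this. If you replace your appeal to ``vague convergence + integrality'' with the clearing-out argument, the rest of your proof — the construction of $\Omega$ from the multiplicity-one $C^1$ convergence of the normals, the compactness/$\eta$-neighborhood contradiction for the path-crossing property, and the observation that the local regularity property passes to Brakke limits — is fine and matches the paper's intent.
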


The theorem follows easily from the fact that $\MMM_i$ converges to $\MMM$ (which follows,
for example, from Brakke's clearing out lemma), and from the local regularity theory
  in~\cite{white-local}.

\begin{theorem}\label{separating-flow-existence-theorem}
Let $\Omega$ be a region in $N$ such that $\Omega$ is an $(m+1)$-dimensional
manifold-with-boundary of class $C^1$.   Let $\Sigma=\partial \Omega$.
Then there is a separating flow $M(\cdot)$ such that $M(0)$ is the Radon measure associated
to $\Sigma$.
\end{theorem}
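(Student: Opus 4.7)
The plan is to construct $M(\cdot)$ via Ilmanen's elliptic regularization, suitably adapted to the present noncompact Riemannian setting. The essential feature of that construction is that the Brakke flow it produces arises as the distributional boundary of a spacetime region $\mathbf E\subset N\times[0,\infty)$ whose initial time slice is $\Omega$; this automatically forces the separating property (3), since any path in $N\times[0,\infty)$ from a point in $\Omega\times\{0\}$ to a point in $\Omega^c\times\{0\}$ must cross $\partial\mathbf E=\MMM$.

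First I would exhaust $N$ by bounded smooth domains $U_k\Subset U_{k+1}$ with $\bigcup_k U_k=N$. In each cylinder $U_k\times[0,\infty)$, and for each $\eps>0$, minimize the weighted perimeter $\int_{\partial^*E}e^{-z/\eps}\,d\Hh^m$ among Caccioppoli sets $E$ with prescribed trace $E\cap(U_k\times\{0\})=(\Omega\cap U_k)\times\{0\}$ and a fixed lateral boundary condition. Sending $\eps\to 0$ after translating vertically in the standard way produces a spacetime region $\mathbf E_k\subset U_k\times[0,\infty)$, and the family of time-slice boundaries defines an integral Brakke flow $t\mapsto M_k(t)$ in $U_k$ whose associated spacetime support is $\partial\mathbf E_k$. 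Because $\Sigma$ is $C^1$ and $N$ has bounded geometry, one can build local upper and lower barriers from small mean-curvature graphs over the tangent planes of $\Sigma$; these pin the initial trace of $\mathbf E_k$ to $\Omega\cap U_k$ and give $C^1$ multiplicity-one convergence of $\spt M_k(0)$ to $\Sigma\cap U_k$.

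Next I would pass to the limit $k\to\infty$. The $C^1$ assumption on $\Sigma$ together with the bounded-geometry hypothesis on $N$ yields uniform local $\Hh^m$-bounds on $M_k(0)$, and Hamilton's Riemannian monotonicity formula (valid under our curvature and injectivity-radius bounds) propagates these bounds forward in time. Brakke's compactness theorem then extracts a subsequential limit $M(\cdot)$, an integral Brakke flow on all of $N$, with $M(0)$ the Radon measure of $\Sigma$ and with $\spt M_{k_j}(0)\to\Sigma$ in $C^1$ with multiplicity one. Theorem~\ref{separating-limits} then promotes the separating property of each $M_k(\cdot)$ to the limit. Finally, condition (4) is automatic for integral Brakke flows in a smooth Riemannian manifold by the local regularity theorem of \cite{white-local}.

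The main obstacle I anticipate is verifying, in the noncompact $C^1$ setting, that the initial trace of the elliptic-regularization limit really is $\Omega\cap U_k$ and that $\spt M_k(0)\to\Sigma$ in $C^1$ with multiplicity one, since these are precisely the hypotheses of Theorem~\ref{separating-limits}. This relies crucially on the $C^1$ regularity of $\Sigma$: near any $p\in\Sigma$, a small piece of $\Sigma$ is squeezed between two smooth mean-curvature graphs over the tangent plane at $p$, which, by the classical compact avoidance principle, act as upper and lower barriers for the elliptic regularization on short time scales, pinning the flow to $\Sigma$ at time $0$ and forcing the required $C^1$ convergence of initial slices.
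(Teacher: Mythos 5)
Your proposal follows the same basic strategy as the paper: construct the flow by Ilmanen's elliptic regularization, note that the separating property is forced because the flow is the boundary of a spacetime region (in the paper, the fact that the minimizing chain $A^\lambda$ separates $N\times[0,\infty)$), and then pass the separating property to the limit via Theorem~\ref{separating-limits}. The difference lies in how you handle the noncompactness of $\Sigma$. You propose to exhaust $N$ by bounded domains $U_k$ and run elliptic regularization inside each cylinder $U_k\times[0,\infty)$ with prescribed traces and a lateral boundary condition; this forces you to worry about those lateral conditions, about uniform local mass bounds as $k\to\infty$, and about Hamilton's monotonicity formula in bounded geometry, all of which are plausible but add technical overhead. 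The paper instead proves the compact-$\Sigma$ case directly by minimizing over all of $N\times[0,\infty)$ (so there is no lateral boundary at all), and then for noncompact $\Sigma$ approximates $\Sigma$ itself by compact hypersurfaces $\Sigma_i$ converging in $C^1$ with multiplicity~$1$, applying the compact case to each $\Sigma_i$ and invoking Theorem~\ref{separating-limits} once more; this cleanly sidesteps the lateral boundary issue and puts all the work on the hypotheses Theorem~\ref{separating-limits} is already tailored to. One minor caution: condition~(4) in the definition of separating flow, the local regularity property of~\cite{white-local}, is not automatic for arbitrary integral Brakke flows; it holds here because the flow is obtained by elliptic regularization, not merely because it is an integral Brakke flow in a smooth manifold, so you should say so rather than call it automatic.
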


This follows easily from elliptic regularization~\cite{ilmanen-elliptic}
 and Theorem~\ref{separating-limits}.
For completeness, we sketch the proof below

\begin{proof}
By passing to the double cover, we can assume that $N$ is oriented.
Consider first the case when $\Sigma$ is compact.
Let $\tilde g$ denote the product metric on $N\times [0,\infty)$.
For $\lambda>0$, let $A^\lambda$ be an $(m+1)$-chain in $N\times [0,\infty)$ that
minimizes mass with respect to the Riemannian metric
\[
   g^{\lambda}_{ij}(x,z) := e^{-(2\lambda/m)z} \tilde g_{ij}(x,z)   \qquad(x\in N, z\in [0,\infty))
\]
among all $(m+1)$-dimensional locally integral currents $A$ with boundary $[\Sigma]$.
Note that if $p$ is in the interior of $\Omega$ and and if $q$ is in $\Omega^c$,
then $(p,0)$ and $(q,0)$
lie in different components of $W:=(N\times[0,\infty))\setminus \spt A^\lambda$.
It follows that $(p,t)$ and $(q,t)$ lie in different components of $W$ for small $t>0$.

Now let $A^\lambda(t)$ be the image of $\Sigma$ under  translation by
  $(x,z) \mapsto (x, z - \lambda t)$, followed by restriction to $N\times (0,\infty)$.
For $t\ge 0$, let $M^\lambda(t)$ be the Radon measure on $N\times (0,\infty)$
associated to $A^\lambda(t)$.
Then $M^\lambda(\cdot)$ is a separating Brakke flow in $N\times (0,\infty)$.
By the theory of elliptic regularization~\cite{ilmanen-elliptic},
 the flows $M^\lambda(\cdot)$ converge 
 (after passing to a subsequence) as $\lambda\to\infty$
  to a limit Brakke flow $M^\infty(\cdot)$,
and  there is $m$-dimensional Brakke flow $M(\cdot)$
in $N$
such that $M^\infty(\cdot)$ is obtained from taking the product of $M(\cdot)$ with $(0,\infty)$.
By  Theorem~\ref{separating-limits}, the flow $M^\infty(\cdot)$ is separating.  Thus $M(\cdot)$ is separating.

For the general case of a noncompact $\Sigma$, we can find a sequence of compact $\Sigma_i$
converging to $\Sigma$ in $C^1$ with multiplicity $1$.
By the compact case of Theorem~\ref{separating-flow-existence-theorem},
  there is a separating flow $M_i(\cdot)$ associated to $\Sigma_i$.
By passing to a subsequence, we can assume that the $M_i(\cdot)$ converge
to an integral Brakke flow $M(\cdot)$.
By Theorem~\ref{separating-limits}, $M(\cdot)$ is also a separating flow.
\end{proof}

\begin{lemma}\label{separating-lemma}
Suppose $t\in [0,\infty)\mapsto M(t)$ is a separating flow in $\RR^{m+1}$ such that $M(0)$ is a hyperplane
$\Sigma$ with multiplicity~$1$.  Then $M(t)=M(0)$ for all $t\ge 0$.
\end{lemma}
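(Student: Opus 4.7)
The plan is to show $\spt M(t) = \Sigma$ for every $t \ge 0$ and that $M(t)$ has multiplicity $1$ on $\Sigma$, which together force the equality of Radon measures $M(t) = M(0)$.

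First I prove $\MMM(t) \subset \Sigma$ by shrinking-sphere avoidance. Given $q \in \RR^{m+1} \setminus \Sigma$ with $d := d(q,\Sigma) > 0$ and a prescribed time $t_* > 0$, let $\nu$ be the unit normal to $\Sigma$ pointing into the component containing $q$, set $p_0 = q + a\nu$, and take initial radius $\rho_0 = \sqrt{a^2 + 2mt_*}$. For $a$ sufficiently large one has $\rho_0 < d + a = d(p_0,\Sigma)$, so $\partial B(p_0, \rho_0)$ is disjoint from $\Sigma = \spt M(0)$, while the MCF-evolving sphere $\partial B(p_0, \sqrt{\rho_0^2 - 2ms})$ has radius $a = |p_0 - q|$ at $s = t_*$ and so passes through $q$. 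Applying Ilmanen's compact avoidance principle to the weak set flow $\MMM$ and this smooth compact MCF yields $q \notin \MMM(t_*)$.

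Next I use the separating hypothesis to show $\Sigma \subset \MMM(t)$ for all $t > 0$. If some $p \in \Sigma \setminus \MMM(t_0)$ existed, I would pick $p_+ \in \Omega$ and $p_- \in \Omega^c$ and produce a spacetime path $(p_+,0) \to (p_+, t_0) \to (p_-, t_0) \to (p_-,0)$ avoiding $\MMM$. The two ``vertical'' legs stay at fixed $p_\pm \notin \Sigma$ and are disjoint from $\MMM$ because $\MMM(t) \subset \Sigma$ for $t > 0$ by the previous step. For the ``horizontal'' middle leg, choose a small ball $B$ about $p$ disjoint from the closed set $\MMM(t_0)$; then $B$ meets both $\Omega$ and $\Omega^c$, and I concatenate a path in $\Omega$ from $p_+$ into $B \cap \Omega$, a straight segment in $B$ crossing $\Sigma$, and a path in $\Omega^c$ from $B \cap \Omega^c$ to $p_-$, all inside $\RR^{m+1} \setminus \MMM(t_0)$. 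This contradicts the definition of a separating flow.

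Finally, with $\spt M(t) = \Sigma$ established, $M(t) = \theta(\cdot, t)\, \Hh^m \llcorner \Sigma$ for some integer-valued $\theta \ge 1$. Huisken's monotonicity formula at any spacetime point $(p_0, t_0)$ with $p_0 \in \Sigma$, $t_0 > 0$ asserts that the Gaussian-weighted mass is non-increasing in $t$; its value at $t = 0$ is $\int_\Sigma \rho_{(p_0, t_0)}(\cdot, 0)\, d\Hh^m = 1$, and its limit as $t \to t_0^-$ is the density $\theta(p_0, t_0)$, so $\theta(p_0, t_0) \le 1$, hence $\theta \equiv 1$. In my view the main obstacle is Step 1: constructing a smooth compact MCF (a shrinking sphere) that starts disjoint from the noncompact plane $\Sigma$ yet still sweeps through a prescribed off-plane point at a prescribed time. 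Once that geometric construction is made, the separating step and the density step are both short.
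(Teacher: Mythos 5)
Your proposal follows the paper's proof exactly: shrinking-sphere barriers force $\MMM\subset\Sigma\times[0,\infty)$, the separating property forces equality, and the integral/monotonicity structure pins the multiplicity to $1$; you have simply written out the details that the paper compresses into three sentences. The only minor imprecision is the phrase ``its limit as $t\to t_0^-$ is the density $\theta(p_0,t_0)$'' — in general that limit is the Gaussian density, and one should first note that any tangent flow at $(p_0,t_0)$ is supported in the tangent plane (hence a static multiplicity-$k$ plane) so that Gaussian density $\le 1$ forces $k=1$, after which local regularity gives $M(t)=\Hh^m\llcorner\Sigma$ — but this is a matter of phrasing, not a gap.
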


\begin{proof}
Shrinking sphere barriers show that the spacetime support $\MMM$ of $M(\cdot)$ is
contained in $\Sigma\times [0,\infty)$.    By the separating property, $\MMM$ must
be all of $\Sigma\times[0,\infty)$.   Since the flow is an integral Brakke flow, it follows
each $M(t)$ is $\Sigma$ with multiplicity~$1$.
\end{proof}

\section*{Data Availability}

Data sharing is not applicable to this article as no datasets were generated or analyzed during the current study.

\nocite{pedrosa-ritore}
\nocite{hoffman-wei}
\newcommand{\hide}[1]{}

\begin{bibdiv}

\begin{biblist}

\bib{chen-giga-goto}{article}{
   author={Chen, Yun Gang},
   author={Giga, Yoshikazu},
   author={Goto, Shun'ichi},
   title={Uniqueness and existence of viscosity solutions of generalized
   mean curvature flow equations},
   journal={J. Differential Geom.},
   volume={33},
   date={1991},
   number={3},
   pages={749--786},
   issn={0022-040X},
   review={\MR{1100211}},
}

\bib{evans-spruck}{article}{
   author={Evans, L. C.},
   author={Spruck, J.},
   title={Motion of level sets by mean curvature. I},
   journal={J. Differential Geom.},
   volume={33},
   date={1991},
   number={3},
   pages={635--681},
   issn={0022-040X},
   review={\MR{1100206}},
}

\bib{GT}{book}{
   author={Gilbarg, David},
   author={Trudinger, Neil S.},
   title={Elliptic partial differential equations of second order},
   series={Classics in Mathematics},
   note={Reprint of the 1998 edition},
   publisher={Springer-Verlag, Berlin},
   date={2001},
   pages={xiv+517},
   isbn={3-540-41160-7},
   review={\MR{1814364}},
}

\bib{hershkovits-w-avoid}{article}{
   author={Hershkovits, Or},
   author={White, Brian},
   title={Avoidance for set-theoretic solutions of mean-curvature-type
   flows},
   journal={Comm. Anal. Geom.},
   volume={31},
   date={2023},
   number={1},
   pages={31--67},
   issn={1019-8385},
   review={\MR{4652509}},
}

\bib{ilmanen-indiana}{article}{
   author={Ilmanen, Tom},
   title={Generalized flow of sets by mean curvature on a manifold},
   journal={Indiana Univ. Math. J.},
   volume={41},
   date={1992},
   number={3},
   pages={671--705},
   issn={0022-2518},
   review={\MR{1189906}},
   doi={10.1512/iumj.1992.41.41036},
}

\bib{ilmanen-proc}{article}{
   author={Ilmanen, Tom},
   title={The level-set flow on a manifold},
   conference={
      title={Differential geometry: partial differential equations on
      manifolds},
      address={Los Angeles, CA},
      date={1990},
   },
   book={
      series={Proc. Sympos. Pure Math.},
      volume={54, Part 1},
      publisher={Amer. Math. Soc., Providence, RI},
   },
   isbn={0-8218-1494-X},
   date={1993},
   pages={193--204},
   review={\MR{1216585}},
   doi={10.1090/pspum/054.1/1216585},
}

\bib{ilmanen-elliptic}{article}{
   author={Ilmanen, Tom},
   title={Elliptic regularization and partial regularity for motion by mean
   curvature},
   journal={Mem. Amer. Math. Soc.},
   volume={108},
   date={1994},
   number={520},
   pages={x+90},
   issn={0065-9266},
   review={\MR{1196160}},
   doi={10.1090/memo/0520},
}

\bib{white-topology}{article}{
   author={White, Brian},
   title={The topology of hypersurfaces moving by mean curvature},
   journal={Comm. Anal. Geom.},
   volume={3},
   date={1995},
   number={1-2},
   pages={317--333},
   issn={1019-8385},
   review={\MR{1362655}},
   doi={10.4310/CAG.1995.v3.n2.a5},
}

\bib{white-local}{article}{
   author={White, Brian},
   title={A local regularity theorem for mean curvature flow},
   journal={Ann. of Math. (2)},
   volume={161},
   date={2005},
   number={3},
   pages={1487--1519},
   issn={0003-486X},
   review={\MR{2180405}},
   doi={10.4007/annals.2005.161.1487}
}

\end{biblist}

\end{bibdiv}

\end{document}